\numberwithin{equation}{section}
\def\be{\begin{equation}}
\def\ee{\end{equation}}
\def\be{\begin{equation}}
\def\ee{\end{equation}}
\newtheorem{lm}{\bf Lemma}
\newtheorem{tr}{\bf Theorem}
\newtheorem{cor}{\bf Corollary}
\theoremstyle{remark}
\newtheorem{rem}{\bf Remark}
\definecolor{inchworm}{rgb}{0.7, 0.93, 0.36}
\definecolor{blun}{cmyk}{0.8, 0.5, 0, 0.7}
\definecolor{darkpastelred}{rgb}{0.76, 0.23, 0.13}
\definecolor{bostonuniversityred}{rgb}{0.8, 0.0, 0.0}
\let\today\relax
\def\ps@pprintTitle{%
    \let\@oddhead\@empty
    \let\@evenhead\@empty
    \def\@oddfoot{\footnotesize\itshape
         {Submitted preprint} \hfill\today}%
    \let\@evenfoot\@oddfoot
    }
\begin{document}
\let\oldproofname=\proofname
\renewcommand{\proofname}{\rm\bf{\oldproofname}}
%%%%%%%%%%%%%%%%%%%%%%%%%%%%%%%%
%%%%%%%%%%%%%%%%%%%%%%%%%%%%%%%%

%\title{Persistence in seasonally varying predator--prey systems\\ with Allee effect}
%\author{C. Rebelo\\
%{\footnotesize CMAFcIO Centro de Matem\'atica, Aplica\c{c}\~{o}es Fundamentais e Investiga\c{c}\~ao Operacional,\\ Faculdade de Ci\^encias, Universidade de Lisboa, Edificio C6, Campo Grande, 1749-016 Lisboa, Portugal\\ mcgoncalves@fc.ul.pt}\\[0.3cm]
% C. Soresina\\
% {\footnotesize Zentrum Mathematik, Technische Universit\"at M\"unchen,\\ Boltzmannstr. 3, 85748 Garching bei M\"unchen, Germany\\ soresina@ma.tum.de}}
%%% Abstract
%\begin{abstract}
%A generalized seasonally-varying predator--prey model with Allee effect in the prey growth is investigated. The analysis is performed only on the basis of some properties determining the
%shape of the prey growth rate and the trophic interaction functions. General conditions for {\color{blue} coexistence} are determined, both in the case of weak and strong Allee effect. Finally, a modified Leslie--Gower predator--prey model with Allee effect is investigated. Numerical results illustrate the qualitative behaviors of the system, in particular the presence of periodic orbits.
%
%{\bf Keywords:} persistence, periodic coefficients, seasonality, predator--prey, Allee effect, basic reproduction number
%\end{abstract}
%\maketitle

%%%%%%%%%%%%%%%%%%%%%%%%%%%%%%%%%%%%%%%%%%%%%
% ELSARTICLE
\title{Persistence in seasonally varying predator--prey systems\\ with Allee effect}
\author[cmaf]{C. Rebelo}%\fnref{fn1}}
\ead{mcgoncalves@fc.ul.pt}
\author[tum]{C. Soresina}\corref{cor1}%\fnref{fn2}}
\ead{soresina@ma.tum.de}

\cortext[cor1]{Corresponding author}
%%\cortext[cor2]{Principal corresponding author}
%%\fntext[fn1]{This is the specimen author footnote.}
%%\fntext[fn2]{Another author footnote, but a little more longer.}
%%\fntext[fn3]{Yet another author footnote. Indeed, you can have any number of author footnotes.}
\address[cmaf]{CMAFcIO Centro de Matem\'atica, Aplica\c{c}\~{o}es Fundamentais e Investiga\c{c}\~ao Operacional,\\ Faculdade de Ci\^encias, Universidade de Lisboa, Edificio C6, Campo Grande, 1749-016 Lisboa, Portugal}
\address[tum]{Zentrum Mathematik, Technische Universit\"at M\"unchen,\\ Boltzmannstr. 3, 85748 Garching bei M\"unchen, Germany}
%\journal{Nonlinear Analysis: Real World Applications}
\journal{xxxxxx}
\begin{abstract}
A generalized seasonally-varying predator--prey model with Allee effect in the prey growth is investigated. The analysis is performed only on the basis of some properties determining the
shape of the prey growth rate and the trophic interaction functions. General conditions for coexistence are determined, both in the case of weak and strong Allee effect. Finally, a modified Leslie--Gower predator--prey model with Allee effect is investigated. Numerical results illustrate the qualitative behaviors of the system, in particular the presence of periodic orbits.
\end{abstract}

\begin{keyword}
persistence  \sep periodic coefficients \sep seasonality \sep predator--prey \sep Allee effect \sep basic reproduction number
\end{keyword}

\maketitle

%%%%%%%%%%%%%%%%%%%%%%%%%%%%%%%%%%%%%%%%%%%%%%%%%%%%%%%%%
%%%%%%%%%%%%%%%%%%%%%%%%%%%%%%%%%%%%%%%%%%%%%%%%%%%%%%%%%
%%%%%%%%%%%%%%%%%%%%%%%%%%%%%%%%%%%%%%%%%%%%%%%%%%%%%%%%%
\section{Introduction}

% Frase introduttiva

More than 90 years after their introduction \cite{volterra1926variazioni, Volterra}, Lotka-Volterra predator-prey models are a topic which attracts the attention of many researchers  from a large range of points of view. Concerning recent works, we mention examples of results on predator-prey models dealing with autonomous ODEs \cite{vera2019dynamics, gonzalez2019competition}, with delay \cite{liu2018bifurcation, chen2019spatiotemporal} and stochastic  \cite{liu2019dynamics, upadhyay2019global} systems of equations,  time-discrete  \cite{weide2019hydra, khan2019bifurcations}, network  \cite{weng2019predator, upadhyay2019spectral} and fractional  \cite{owolabi2018modelling} models.
Cross diffusion and pattern formation are also topics with recent results \cite{conforto2018, desvillettes2019, tulumello2014cross} as well as the derivation of classical functional responses \cite{metz2014dynamics, geritz2012mechanistic, dawes2013derivation} and the impact of seasonality in ODEs models \cite{lisena2018global, lopez2019multiplicity}.

Seasonally varying predator--prey models, described as non autonomous ODEs systems depending on periodic coefficients in order to take into account the seasonal changes of the environment in which the predation process takes place \cite{basille2013ecologically}, have not been deeply investigated as other types of predator--prey models.

% Breve review di modelli predator-prey a coefficienti periodici
As far as we know the first study of predator--prey models with periodic coefficients was the paper \cite{cushing1977periodic} by Cushing. It studied the existence of periodic solutions in a non-autonomous predator--prey model by use of standard  techniques of bifurcation theory. Later, other authors have investigated the existence of periodic solutions, the persistence and chaos in seasonally models \cite{amine1994periodic, bohner2006existence, cui2006permanence, fan2003dynamics, kuznetsov1992, lopez1996periodic, rinaldi1993multiple, zhidong1999, zhidong1999uniform}. 

% Aricolo Garrione-Rebelo
In a recent paper \cite{garrione2016} a seasonally dependent predator--prey model with general growth rate for the prey and in which the functional response can belong to a large class of functions was considered and persistence for the predators and for the prey was obtained, extending the notion of basic reproductive number $R_0$ to this context \cite{rebelo2012}, concept originally introduce in epidemic models. Here we denote with $R_0$ \cite{georgescu07,garrione2016} the basic reproduction number, i.e. the number of predators a predator gives rise during its life when introduced in a prey-population. Thus, using the results in \cite{zhao2003} the existence of a non-trivial periodic solution was guaranteed when $R_0>1$.  This work generalizes several previous papers in what concerns persistence and also because it gives persistence results for a  very general model class which include two prey-two predators models, Leslie--Gower models. The results were obtained using a technique based on an abstract theorem given in \cite{fonda88} and already applied in \cite{rebelo2012,garrione2016}. 

% Introduciamo l'Allee effect
Nevertheless, even if the growth rate considered for the prey is quite general, this class of models does not include the Allee effect \cite{allee1949principles, stephens1999allee} (observed in populations of bisexual organisms and/or with a team behavior and a mutual help). In fact, to model the Allee effect, the prey growth function is not monotonic but it increases for small population abundance. The Allee effect can be weak or strong, depending on the sign of prey growth function (non-negative or negative, respectively) for small population abundance. Autonomous predator--prey models with Allee effect in the prey are largely analyzed in literature \cite{buffoni2011, buffoni2016, gonzalez2011multiple, van2007heteroclinic, wang2011predator}, the analogous problem for seasonally dependent models was, as far as we know, less studied. 

% Allee effect e coefficienti periodici
There are few studies concerning population models with the Allee effect in which seasonality is considered. With respect to models describing the growth of one single species, we refer the papers \cite{padhi2010,rizaner12}. In these papers the growth of the species is modeled by the equation 
$$y'=a(t)y(y-b(t))(c(t)-y)$$
where $a(t)$, the intrinsic growth of the species, $b(t)$, its Allee threshold and $c(t)$, the carrying capacity of the habitat, are all seasonally dependent functions. In the first paper the condition $b(t)<c(t)$ for each $t$ was assumed but the case $\inf c<\max b$ is allowed. Under some additional conditions a result about the existence of at least two non-trivial periodic solutions for the equation was obtained using the Leggett--Williams multiple fixed point theorem on cones \cite{leggett1979}. In \cite{rizaner12} the case $\max b< \inf c$ was analysed, and the existence of two nontrivial periodic solutions was guaranteed, also establishing their stability properties.

% Cosa studiamo e risultati
In this paper we consider a general predator--prey model with seasonality in which we take into account an Allee effect on the prey growth. Both cases of weak and strong Allee effects are analyzed. The keynote point of the paper is that the theoretical results are obtained for a general class of models, only on the basis of some properties determining the shape of the prey growth function and of the functional response. In the case a weak Allee effect is considered, we prove extinction when the basic reproduction number $R_0<1$, persistence when $R_0>1$, and the existence of a periodic solution when $R_0>1$. The results are obtained using the technique in \cite{garrione2016} after some preliminary steps. In the strong Allee effect case, in order to prove the main theorem, an auxiliary result (see Theorem \ref{strong_preyonly}) about the existence of two non-trivial periodic solutions $N_\pm^*(t)$ in the case of a seasonally dependent model for the evolution of one species is obtained. This result generalizes analogous one stated in \cite{rizaner12}.  Thanks to this auxiliary result, we are able to prove extinction of the predators if $\lambda^-_2<\lambda^+_2<1$ and the existence of a nontrivial periodic solution if $\lambda^-_2<1<\lambda^+_2$, where 
$$\displaystyle \lambda^\pm_2=\int_0^T ( \gamma(t)f(t, N_\pm^*(t),0)-\delta_1(t)) dt,$$
being $T$ the period, $\gamma,\; \delta_1$ the conversion factor and the mortality rate of predators respectively, and $N_\pm^*(t)$ the $T$-periodic orbits in absence of predators.

Note that in the case of strong Allee effect there are two stable periodic orbits in the predator-free line: the origin and $N_+^*(t).$ We have that if $\lambda^+_2>1$ the same is verified by the basic reproduction number associated to $(N_+^*(t),0)$ but in this case persistence is not guaranteed by this condition.
The result on the existence of a periodic solution is based on degree theory following ideas in \cite{zhidong1999}, see also  \cite{ortega1995,amine1994periodic, alvarez1986application, makarenkov2014topological}. In the case $1<\lambda^-_2<\lambda^+_2$, numerical simulations lead us to conjecture that the only feature is the predator extinction, but this is still an open problem that we want to address in a future work. Finally, we point out that periodic predator--prey models of Leslie--Gower type can be treated using the same techniques. Also in this case, numerical simulations are reported in order to show the possible outcomes.

% Outline of the paper
In Section \ref{Sec:Model} we describe the model and the conditions satisfied by the growth rate of the prey, both in the case of weak and  in the case of strong Allee effect. We also describe here the class of admissible functional responses. Section \ref{Sec:Weak} is dedicated to the case of weak Allee effect, while in Section \ref{Sec:Strong} the case of strong Allee effect is analyzed. The slightly different class of models of Leslie--Gower type is then treated in Section \ref{Sec:LG}. Finally in Section \ref{Sec:NumRes} some numerical simulations are reported which illustrate the obtained results and help us to make some conjectures. In Section \ref{Sec:Concl} some concluding remarks can be found.

%%%%%%%%%%%%%%%%%%%%%%%%%%%%%%%%%%%%%%%%%%%%%%%%%%%%%%%%%
%%%%%%%%%%%%%%%%%%%%%%%%%%%%%%%%%%%%%%%%%%%%%%%%%%%%%%%%%
%%%%%%%%%%%%%%%%%%%%%%%%%%%%%%%%%%%%%%%%%%%%%%%%%%%%%%%%%
\section{The model}\label{Sec:Model}
%In this paper we consider the following model describing the evolution of a system with one prey and one predator, and we give a general extinction/uniform persistence result under quite natural requirements on the terms appearing in the model. 
Indicating with $N,\;P$ the prey and predator abundance respectively, the system writes
\begin{equation}\label{PP}
\begin{cases}
\dot{N}=k(t,N)N-f(t,N,P)P,\\
\dot{P}=\gamma(t)f(t,N,P)P-\delta_1(t)P-\delta_2(t)P^2.
\end{cases}
\end{equation}
We assume that\footnote{Hereafter $\mathbb{R}_+$ denotes the set of non-negative real numbers. }
$$k:\mathbb{R}\times\mathbb{R}_+\to\mathbb{R},\quad f:\mathbb{R}\times\mathbb{R}_+^2\to\mathbb{R}_+,\quad \delta_i(t):\mathbb{R}\to\mathbb{R}_+,\;i=1,2,\quad \gamma:\mathbb{R}\to\mathbb{R}_+$$
are continuous functions, $T$-periodic ($T>0$) in the $t$-variable and continuous differentiable in $N,\;P$ (if depending on such variables). Here $k(t,N)N$ corresponds to the prey growth in absence of predators, $f(t,N,P)$ is the predator functional response and $\gamma(t)f(t,N,P)$ is the numerical response. The term $\delta_1(t)$ corresponds to the death rate of predators, while $\delta_2(t)P^2$ is related to an intra-specific competition between predators.

As a preliminary assumption, we ask that:
\begin{itemize}
	\item $\delta_1(t)>0$ for every $t\in[0,T]$;
	\item if $\delta_2(t) \not\equiv 0$, we assume that $\min_{t\in [0,T]}{\gamma(t)>0}$. This is actually a quite common assumption in literature.
\end{itemize}

We now introduce our main hypotheses on the terms $k(t,N)$ and $f(t,N,P)$. In order to simplify the notation hereafter, for each periodic function $z:\mathbb{R}\to \mathbb{R}_+$ we set
$$\underline{z}=\min_{t\in [0,T]}{z(t)}, \qquad \overline{z}=\max_{t\in [0,T]}{z(t)}.$$
\medskip

\subsection{The prey growth function}\label{Sub:PreyGrowth}
We first deal with the prey growth function, assuming some properties to describe an Allee effect. We distinguish two cases, a weak Allee effect and a strong Allee effect.\\[0.3cm]

%\subsubsection{Weak Allee effect}
%To model a weak Allee effect, the following properties are assumed:
{\bf Weak Allee effect}\\[-0.5cm]
\begin{description}
	%\item $k(t,N)$ is bounded above,
	\item[(gw1)] for every $t$, there exists $K_+(t)>0$ such that $k(t,K_+(t))=0$,
	\item[(gw2)] for every $t$, $k(t,0)>0$,
	\item[(gw3)] for every $t$, there exist $\xi(t) \in [0,\;\underline{K}_+]$ such that $\dfrac{\partial k}{\partial N}(t,\xi(t))=0$,
	\item[(gw4)] for every $t$, $\dfrac{\partial k}{\partial N}(t,N)(N-\xi(t))<0$, when $N\neq \xi(t)$.
\end{description}

\vspace{0.5cm}
{\bf Strong Allee effect}\\[-0.5cm]
\begin{description}
	%\item $k(t,N)$ is bounded above,
	\item[(gs1)] for every $t$, there exist $K_-(t),\;K_+(t)>0$ such that $k(t,K_-(t))=0,\; k(t,K_+(t))=0$ and $\overline{K}_-<\underline{K}_+$,
	\item[(gs2)] for every $t$, $k(t,0)<0$, and $k(t,N)(N-K_-(t))(K_+(t)-N)>0$,
	\item[(gs3)] for every $t$, there exist $\xi(t) \in (\overline{K}_-,\underline{K}_+)$ such that $\dfrac{\partial k}{\partial N}(t,\xi(t))=0$,
	\item[(gs4)] for every $t$, $\dfrac{\partial k}{\partial N}(t,N)(N-\xi(t))<0$, when $N\neq\xi(t)$.
\end{description}

The quantity $K_-(t)$ represents the minimum population size, and $K_+(t)$ is the carrying capacity of the habitat at time $t$.
Note that by the $T$-periodicity of $k$ in $t$,  $K_+(t)$ and $\xi(t)$ and eventually $K_-(t)$ are $T$-periodic. Also under these hypothesis $k(t,N)$ turns out to be bounded above. 
In case of a weak Allee effect, the first and the last assumptions imply that the function $k$, fixed $t$, has only one zero, $K_+(t)>0$ and that it is positive when $0 < x < K_+(t)$ and negative when $x >K_+(t)$, while in case of strong Allee effect the function $k$, fixed $t$, has only two zeroes, $K_-(t)$ and $K_+(t)>0$, that it is positive when $K_-(t)< N(t)< K_+(t)$ and negative when $0 < N(t) < K_-(t)$ or $N(t) > K_+(t)$. Note that property (gw4) implies (gw3), as well as (gs4) and (gs3); nevertheless we prefer to present the properties in this way for sake of clarity.

We are also assuming just one maximum when $k$ is positive, meaning that there is an optimal population size corresponding to a maximum rate of growth. Far from this optimal value the prey growth rate decreases: for values smaller than $\xi$ the individuals could be very few and too sparse, while for greater values than $\xi$ the competition for resources becomes evident.

Qualitative shapes of $k$ for weak and strong Allee effect are shown in Figure \ref{figG}.

\rem{With these general assumptions, which are satisfied by functions reported in \cite{buffoni2011,buffoni2016}, $k$ is suitable to model a weak and a strong Allee effect on the prey growth.  In \cite{buffoni2011,buffoni2016} other technical assumptions on the prey growth were considered, which involve the first and the second derivatives of $k$ (with respect to $N$), in order to perform the existence and stability analysis of equilibrium states.}

\begin{figure}
\begin{center}
\subfigure[\label{weak}]{\includegraphics[width=.5\columnwidth,trim={4.2cm 8cm 4.5cm 8cm},clip]{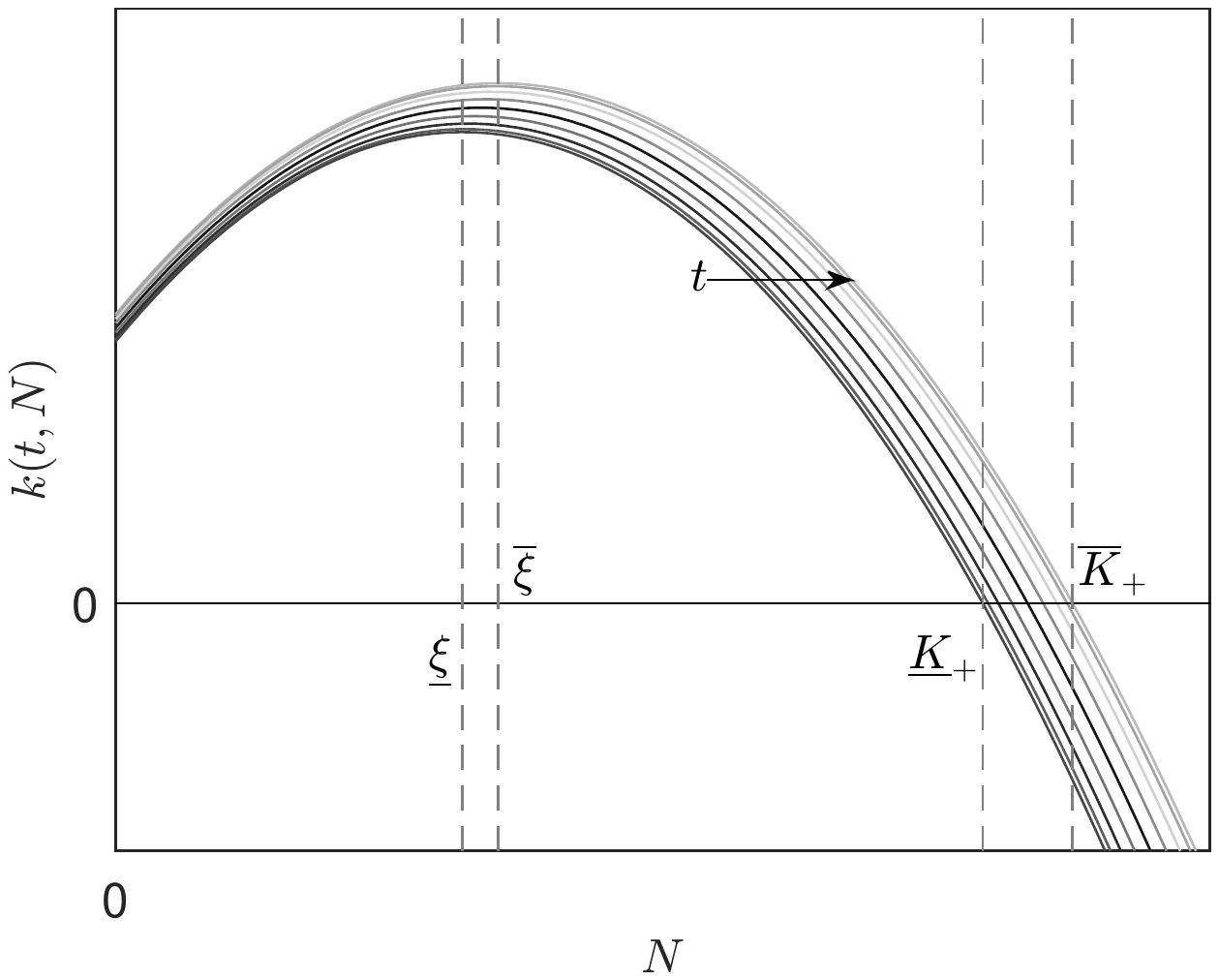}}
\hspace{-0.3cm}
\subfigure[\label{strong}]{\includegraphics[width=.5\columnwidth,trim={4.2cm 8cm 4.5cm 8cm},clip]{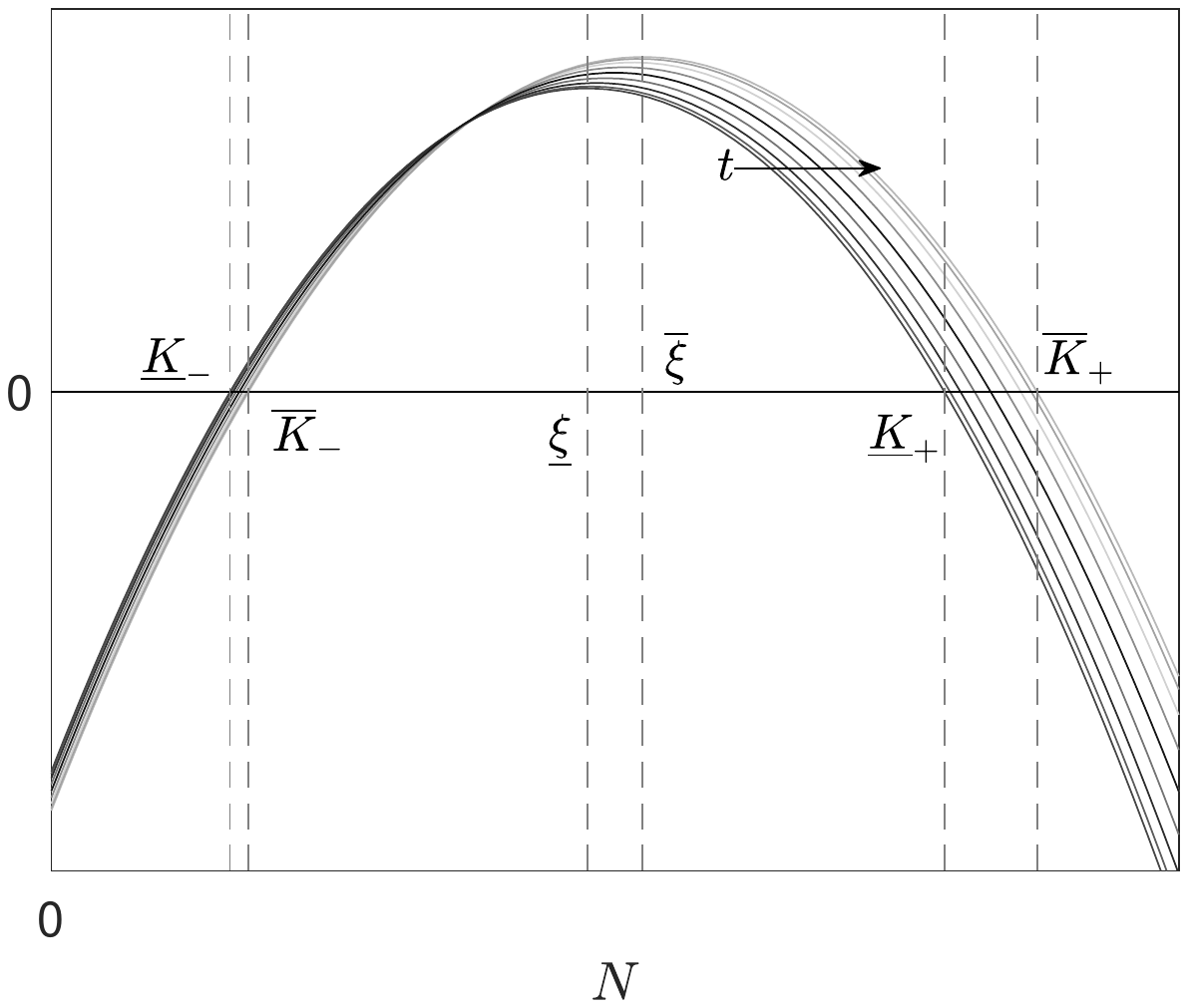}}
\caption{Qualitative shape of the function $k(t,N)$ in case of weak (a) and strong (b) Allee effect.}\label{figG}
\end{center}
\end{figure}

\subsection{The functional response}
The conditions on the functional response are the following:
\begin{description}
	\item[(f1)] for all $t$, $f(t,0,P)\equiv 0$,
	\item[(f2)] for every $t,\; P$, $f(t,N,P)>0$ if $N>0$,
	\item[(f3)] for all $t$, fixed $N$, the function $P\mapsto f(t,N,P)$ is non-increasing in $P$ (or independent of it),
	\item[(f4)] for all $t$, fixed $P$, the function $N\mapsto f(t,N,P)$ is non-decreasing in $N$,
	\item[(f5)] for all $t$, there exists a non negative continuous function $f_0(t,P)$ such that it holds
	      $$f_0(t,P)=\lim_{N\to 0^+}\dfrac{f(t,N,P)}{N},$$
				uniformly in $t\in \mathbb{R}$ and $P$ belonging to a compact set.
\end{description}

The first hypothesis imposes that predator functional response cannot be positive if there are no preys and the second implies that the functional response describes a loss term in the prey equation and a gain term in the predator one. The third one essentially says that the more the predators, the less one prey is expected to contribute to their growth; the fourth says that when there are more preys, each one contributes more to the growth of the predators. Finally, the fifth property is technical but reasonable; in particular it is used in the proof of Theorem \ref{teo_oer_weak}.

Functional responses for predator--prey models widely used in literature satisfying these properties are listed in \cite{garrione2016}.

\section{Persistence with a weak Allee effect}\label{Sec:Weak}

Assuming that the prey growth function describes a weak Allee effect (properties (gw1)--(gw4)), it is possible to obtain the following results, as in \cite{garrione2016}. However, the proof of Theorem \ref{th:wA0p} differs from the one in \cite{garrione2016}, where the assumption on the monotonicity of $k$ with respect to $N$ is crucial.

\begin{tr}\label{th:wA0p}
Assume that the prey growth function satisfies the listed properties for a weak Allee effect (properties (gw1)--(gw4)). Consider the prey dynamics in absence of predator described by 
\begin{equation}\label{weak_preyonly}
\dot{N}=k(t,N)N-\nu N,
\end{equation} 
where $\nu>0$ is constant. Then,  for every $0\leq \nu<\nu_*$ where
$$\nu*:=\dfrac{1}{T}\int_{0}^{T}k(s,0)ds,$$
equation \eqref{weak_preyonly} admits a unique, positive, bounded, globally asymptotically stable $T$-periodic solution $N^{*,\nu}$. Moreover, 
\begin{equation}\label{limN_weak}
\lim_{\nu\to 0}N^{*,\nu}(t)=N^{*,0}(t) \quad uniformly\; in\; t\in [0,T].
\end{equation}
\end{tr}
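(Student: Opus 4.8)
The plan is to treat \eqref{weak_preyonly} as a scalar $T$-periodic equation $\dot N=g_\nu(t,N)$ with $g_\nu(t,N)=(k(t,N)-\nu)N$, and to exploit that its Poincar\'e (time-$T$) map $\mathcal{P}_\nu\colon N_0\mapsto N(T;N_0)$ is defined on $\mathbb{R}_+$, continuous, strictly increasing (solutions of a scalar equation cannot cross) and fixes $0$. Three preliminary facts drive everything. First, \emph{a priori bounds}: by (gw1)--(gw4), $k(t,N)<k(t,K_+(t))=0$ whenever $N>\overline{K}_+$, so on the line $N=M:=\overline{K}_++1$ one has $\dot N=(k(t,M)-\nu)M<0$ for every $\nu\ge 0$; hence $[0,M]$ is positively invariant, and since solutions starting above $M$ are strictly decreasing until they enter it, $[0,M]$ is globally absorbing. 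Second, the \emph{behaviour near the trivial solution}: linearising along $N\equiv0$ gives $\mathcal{P}_\nu'(0)=\exp\!\big(\int_0^T(k(s,0)-\nu)\,ds\big)=\exp\!\big(T(\nu_*-\nu)\big)>1$, precisely because $\nu<\nu_*$; thus $0$ is repelling. Third, a \emph{positive lower solution}: writing $k(t,0)-\nu_*=\dot\Phi(t)$ with $\Phi(t)=\int_0^t(k(s,0)-\nu_*)\,ds$ a $T$-periodic, zero-mean primitive, the function $\phi_\varepsilon(t):=\varepsilon\,e^{\Phi(t)}$ satisfies $\dot\phi_\varepsilon=(k(t,0)-\nu_*)\phi_\varepsilon$, and since $\nu<\nu_*$ and $k(t,\phi_\varepsilon(t))-k(t,0)\to0$ uniformly as $\varepsilon\to0$, for $\varepsilon$ small we get $k(t,0)-\nu_*<k(t,\phi_\varepsilon(t))-\nu$ for all $t$, i.e. $\phi_\varepsilon$ is a strict $T$-periodic lower solution with $\phi_\varepsilon<M$.

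Existence of $N^{*,\nu}$ then follows from the ordered sub/super solution method for periodic scalar equations (lower solution $\phi_\varepsilon$, upper solution the constant $M$), giving a $T$-periodic solution with $\phi_\varepsilon\le N^{*,\nu}\le M$; equivalently, $\mathcal{P}_\nu$ has a fixed point in $(0,M]$, which is also seen directly from $\mathcal{P}_\nu(N_0)>N_0$ for $N_0$ small (by $\mathcal{P}_\nu'(0)>1$), $\mathcal{P}_\nu(M)<M$, and continuity. This already yields the ``positive'' and ``bounded'' parts of the statement.

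For uniqueness and global asymptotic stability I would work with the monotone map $\mathcal{P}_\nu$ on the absorbing interval $[0,M]$: since $0$ is repelling and $[0,M]$ is invariant, it suffices to show every positive $T$-periodic solution $N^*$ is attracting, i.e. has Floquet multiplier $<1$; using $\int_0^T\big(k(t,N^*(t))-\nu\big)\,dt=\int_0^T \dot N^*/N^*\,dt=0$, this multiplier reduces to $\exp\!\big(\int_0^T \frac{\partial k}{\partial N}(t,N^*(t))\,N^*(t)\,dt\big)$, whose exponent is negative once one knows $N^*(t)>\xi(t)$ for all $t$, since by (gw4) the integrand then has the sign of $-(N^*(t)-\xi(t))<0$. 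This is the \textbf{main obstacle}, and exactly the point where the argument departs from the monotone-$k$ proof of \cite{garrione2016}: one must rule out that a positive $T$-periodic solution dwells in the increasing branch $\{N<\xi(t)\}$ of $k$. I would establish this using the preliminary observation that on $\{t:N^*(t)\le\xi(t)\}$ monotonicity of $k$ up to $\xi$ forces $k(t,N^*(t))\ge k(t,0)$, combined with comparison against $\phi_\varepsilon$ and the repelling nature of $0$ to push the orbit off that branch. Once each positive periodic solution is a sink and $0$ is a source, an increasing one-dimensional map on $[0,M]$ can have only one interior fixed point; together with the monotone convergence of the iterates of $\mathcal{P}_\nu$ and the absorbing property, this gives uniqueness and global asymptotic stability among positive solutions.

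Finally, for \eqref{limN_weak} note that all the above bounds are uniform for $\nu\in[0,\nu_*/2]$: the same $M$ works, and $\varepsilon$ can be fixed since $\nu_*-\nu\ge\nu_*/2$, so $\{N^{*,\nu}\}$ is equibounded and bounded away from $0$, and through the equation equi-Lipschitz. By Arzel\`a--Ascoli, along any sequence $\nu_n\to0$ a subsequence converges uniformly to a positive $T$-periodic solution of $\dot N=k(t,N)N$, which must be $N^{*,0}$ by the uniqueness just proved; since every subsequence has this same limit, $N^{*,\nu}\to N^{*,0}$ uniformly on $[0,T]$.
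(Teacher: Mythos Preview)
Your overall plan is sound and largely parallel to the paper's, but the step you yourself flag as the ``main obstacle'' --- showing that every positive $T$-periodic solution satisfies $N^*(t)>\xi(t)$ --- is not actually proved, and your proposed fix does not work. Knowing $N^*>\phi_\varepsilon$ for an arbitrarily small sub-solution $\phi_\varepsilon$ gives no lower bound anywhere near $\xi(t)$, and the observation that $k(t,N^*(t))\ge k(t,0)$ on $\{N^*\le\xi\}$ does not by itself control the sign of $\int_0^T\frac{\partial k}{\partial N}(t,N^*)\,N^*\,dt$. So as written there is a genuine gap in the uniqueness/stability part.

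The paper handles this point differently, and this is really the heart of the argument. Instead of computing Floquet multipliers, the paper first \emph{localizes} the periodic solution: following Hale, it asserts $\underline{K}_+^\nu<N^{*,\nu}(t)<\overline{K}_+^\nu$, where $K_+^\nu(t)$ is the zero of $k(t,\cdot)-\nu$ on the decreasing branch, so in particular $N^{*,\nu}(t)>\xi(t)$. The underlying reason is elementary: at any extremum $t_0$ of a positive periodic solution one has $k(t_0,N^{*,\nu}(t_0))=\nu$, and (at least when $\nu<\min_t k(t,0)$, hence certainly for $\nu$ near $0$) the only root of $k(t_0,\cdot)=\nu$ lies at $K_+^\nu(t_0)>\xi(t_0)$. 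With the periodic solutions known to live on the decreasing branch, the paper proves uniqueness by a \emph{direct comparison}, not via multipliers: for $N_1^{*,\nu}>N_2^{*,\nu}$ one sets $v=N_1^{*,\nu}-N_2^{*,\nu}>0$, uses $k(t,N_1^{*,\nu})<k(t,N_2^{*,\nu})$ to get $\dot v<(k(t,N_1^{*,\nu})-\nu)v$, and integrates using $\int_0^T(k(t,N_1^{*,\nu})-\nu)\,dt=0$ to reach $v(T)<v(0)$, a contradiction. Your Floquet computation would also go through once this localization is in hand, so the missing ingredient in your proposal is precisely this a priori bound, which comes from the sign structure of $k(t,\cdot)-\nu$ rather than from comparison with small sub-solutions. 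For the continuity in $\nu$ your Arzel\`a--Ascoli argument and the paper's Poincar\'e-operator/continuous-dependence argument are essentially equivalent.
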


\begin{proof}
Our proof follows \cite[p.128--129]{hale2012}.

The trivial solution $N(t)\equiv 0$ is a $T$-periodic solution which, by the definition of $\nu$, is unstable. Hence there exists a positive solution $N(t)$ such that $\lim_{t\to +\infty} N(t)\neq 0$.

Moreover when $N>\overline{K}_+$ the right member of \eqref{weak_preyonly} is negative. Therefore $N(t)$ is bounded  in $ \mathbb{R}$ and  approaches a $T$-periodic solution $N^{*,\nu}(t)$ as $t \to +\infty$. Furthermore, this $T$-periodic solution has the property $\underline{K}_+^\nu<N^{*,\nu}(t)<\overline{K}_+^\nu$ and hence is positive and bounded.

To prove the uniqueness of this $T$-periodic solution, we suppose that $N^{*,\nu}_1(t)$ and $N^{*,\nu}_2(t)$ are two $T$-periodic solutions such that $v(t):=N^{*,\nu}_1(t) - N^{*,\nu}_2(t)>0$ for all $t\in[0,T]$. (Note that this hypothesis is not restrictive; if there exists $\hat{t}$ such that $N^{*,\nu}_1(\hat{t}) = N^{*,\nu}_2(\hat{t})=\hat{N}$, then $\hat{N}$ is an equilibrium point.)

Then we have
\begin{align*}
\dot{v}(t)&=\dot{N}^{*,\nu}_1(t)-\dot{N}^{*,\nu}_2(t)\\
              &=\left(k(t,N^{*,\nu}_1(t))-\nu\right) N^{*,\nu}_1(t) - \left(k(t,N^{*,\nu}_2(t))-\nu\right) N^{*,\nu}_2(t)\\
              &=k(t,N^{*,\nu}_1(t)) \left(N^{*,\nu}_1(t)-N^{*,\nu}_2(t)\right)+ \left( k(t,N^{*,\nu}_1(t)) -k(t,N^{*,\nu}_2(t))\right)N^{*,\nu}_2(t)+\\
& -\nu \left(N^{*,\nu}_1(t)-N^{*,\nu}_2(t)\right).
\end{align*}
Since $k(t,N)$ is decreasing in $[\xi(t),+\infty)$ with respect to $N$, we have that $ k(t,N^{*,\nu}_1(t)) -k(t,N^{*,\nu}_2(t))<0$, and then it turns out 
$$\dot{v}(t)<k(t,N^{*,\nu}_1(t))v-\nu v.$$
Since we have
$$\dfrac{\dot{v}}{v}<k(t,N^{*,\nu}_1(t))-\nu,$$
we obtain
$$v(T)<v(0)\exp{\left(\int_0^T{\left( k(s,N^{*,\nu}_1(s))-\nu\right)ds}\right)}.$$
But remembering that $N^{*,\nu}_1(t)$ is a $T$-periodic orbit, we have
$$\int_0^T{\left( k(s,N^{*,\nu}_1(s))-\nu\right)ds}=\log{\dfrac{N^{*,\nu}_1(T)}{N^{*,\nu}_1(0)}}=0,$$
from which we obtain $v(T)<v(0)$ and the contradiction.

Finally, since $N^{*,\nu}(t)$ is the only $T$-periodic solution with positive initial data, it is globally asymptotically stable.

We have finally to prove the validity of \eqref{limN_weak}. To this end, since the solutions $N^{*,\nu}$ satisfy \eqref{weak_preyonly} and they are bounded, thanks to the Gronwall's lemma they are uniformly bounded with respect to $\nu$. Thus, denoting by $w_\nu$ the corresponding fixed point of $P^T_\nu$, the $T$-Poincar\'e operator associated with \eqref{weak_preyonly}, we can assume without loss of generality that $w_\nu$ converges. In view of the uniqueness of the fixed point $w_\nu$ and the fact that $P^T_\nu$ depends continuously on $\nu$ when restricted to compact sets, we thus infer that $w_\nu \to w_0$, and the assertion \eqref{limN_weak} follows from the continuous dependence on the initial datum.
\end{proof}

\begin{tr}\label{teo_oer_weak}
Assume that the prey growth function and the functional response satisfy properties of a weak Allee effect (gw1)--(gw4) and (f1)--(f5), respectively. Then, 
\begin{itemize}
\item[a)] the inequality
            $$\int_0^T{\left(\gamma(t)f(t,N^{*,0},0)-\delta_1(t)\right)dt}<0$$
            implies the extinction of predators  for \eqref{PP}, i.e. $P(t)\to 0$ for $t\to + \infty$. Moreover we have $|N(t)-N^{*,0}(t)|\to 0$ for $t\to + \infty$.
\item[b)] the inequality
            $$\int_0^T{\left(\gamma(t)f(t,N^{*,0},0)-\delta_1(t)\right)dt}>0$$
            implies the uniform persistence of both predators and prey for \eqref{PP}.
\end{itemize}
\end{tr}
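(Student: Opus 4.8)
\noindent\emph{Proof plan.} The idea is to bring the system into the abstract persistence framework already used in \cite{garrione2016} (based on \cite{fonda88}), the one genuinely new ingredient being Theorem~\ref{th:wA0p}, which provides the globally attracting predator-free periodic orbit $N^{*,0}$ in the present non-monotone setting. First I would establish point dissipativity. Since $f\ge 0$ and $P\ge 0$, every positive solution satisfies $\dot N\le k(t,N)N$, and (gw1), (gw4) give $k(t,N)<0$ for $N>\overline{K}_+$, so $N(t)$ is bounded with $\limsup_{t\to+\infty}N(t)\le\overline{K}_+$. On the resulting box $\{0\le N\le M\}$ let $\kappa:=\max\{k(t,N):\ 0\le N\le M,\ t\in[0,T]\}$; using $\gamma(t)\le\overline{\gamma}$ one checks that $\overline{\gamma}N+P$ satisfies a linear differential inequality $\frac{d}{dt}(\overline{\gamma}N+P)\le C-\underline{\delta_1}(\overline{\gamma}N+P)$ for a suitable constant $C$ (the functional-response terms cancel up to a non-positive remainder), so $\underline{\delta_1}>0$ forces $P$ bounded as well, and the Poincar\'e map has a compact absorbing set in $\mathbb{R}_+^2$.

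For part a), fix $\varepsilon>0$; by the comparison $\dot N\le k(t,N)N$ and the global asymptotic stability of $N^{*,0}$ for $\dot y=k(t,y)y$ (Theorem~\ref{th:wA0p} with $\nu=0<\nu_*$), there is $t_\varepsilon$ with $N(t)\le N^{*,0}(t)+\varepsilon$ for $t\ge t_\varepsilon$. Then, by (f4) and (f3),
\[
\frac{\dot P(t)}{P(t)}\ \le\ \gamma(t)\,f\!\left(t,N^{*,0}(t)+\varepsilon,0\right)-\delta_1(t),\qquad t\ge t_\varepsilon,
\]
and by continuity of $f$ the integral over $[0,T]$ of the right-hand side tends, as $\varepsilon\to0$, to $\int_0^T(\gamma f(t,N^{*,0},0)-\delta_1)\,dt<0$; choosing $\varepsilon$ small and integrating over consecutive periods gives $P(t)\to0$ exponentially. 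Once $P\to0$, the $N$-equation is an asymptotically $T$-periodic perturbation of $\dot N=k(t,N)N$: near $N=0$ property (f5) yields $\dot N/N=k(t,0)-f_0(t,P)P+o(1)$ with $\frac1T\int_0^Tk(s,0)\,ds=\nu_*>0$ and $P\to0$, so $N$ is eventually bounded away from $0$; being also bounded above, $N(t)$ stays in a compact subset of $(0,+\infty)$, and a standard sandwiching between the $T$-periodic solutions of $\dot y=k(t,y)y\pm\varepsilon$ (which exist near $N^{*,0}$ for $\varepsilon$ small, by Theorem~\ref{th:wA0p} and continuity) forces $|N(t)-N^{*,0}(t)|\to0$.

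For part b), I would describe the boundary flow on $\partial\mathbb{R}_+^2$: on the $P$-axis $f(t,0,P)\equiv0$ by (f1) and $\dot P=-\delta_1P-\delta_2P^2<0$, so every orbit tends to the origin $O$; on the $N$-axis the dynamics is $\dot N=k(t,N)N$, for which $O$ is a repeller --- because $\nu_*>0$, i.e. (gw2) --- and $N^{*,0}$ is globally asymptotically stable in $(0,+\infty)$ by Theorem~\ref{th:wA0p}. Hence $\{O,(N^{*,0}(\cdot),0)\}$ is an acyclic covering of the boundary by isolated invariant sets, and it remains to check that each repels the interior. The linearization at $O$ has Floquet exponents $\nu_*>0$ along $N$ (using (f1) and (f5) to see that the predation loss is of higher order near $O$) and $-\frac1T\int_0^T\delta_1\,dt<0$ along $P$, so $W^s(O)\subset\partial\mathbb{R}_+^2$; the linearization at $(N^{*,0},0)$ is stable along $N$ and has Floquet exponent $\frac1T\int_0^T(\gamma f(t,N^{*,0},0)-\delta_1)\,dt>0$ along $P$ by hypothesis, so $W^s((N^{*,0},0))\subset\partial\mathbb{R}_+^2$ too. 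Feeding these facts into the abstract theorem of \cite{fonda88}, in the form used in \cite{garrione2016}, then yields the uniform persistence of both components.

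The main obstacle is essentially already removed: in \cite{garrione2016} the monotonicity of $k$ in $N$ was what made the predator-free dynamics tractable, and here Theorem~\ref{th:wA0p} plays exactly that role, after which the boundary flow has the same qualitative structure and the persistence machinery applies. The two points that still need genuine care are the convergence $N(t)\to N^{*,0}(t)$ in part a) --- an asymptotically-periodic argument in which (f5) is precisely what prevents the prey from collapsing to $0$ once $P\to0$ --- and the verification that $O$ is a \emph{uniform} repeller for the full, non-monotone system, which again rests on (f1) and (f5).
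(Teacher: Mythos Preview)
Your proposal is correct and follows essentially the same route as the paper: both reduce everything to the framework of \cite{garrione2016}/\cite{fonda88}, using Theorem~\ref{th:wA0p} as the replacement for the monotonicity of $k$ that was needed there. The paper packages dissipativity via the explicit polygonal absorbing set $\mathcal{K}'$ (built from $N+P/\overline{\gamma}$) rather than your differential inequality for $\overline{\gamma}N+P$, and then defers the extinction argument in a) and the weak/uniform persistence upgrade in b) directly to \cite[Theorem~3.2]{garrione2016} and \cite[Theorem~1.3.3]{zhao2003}; your outline simply unpacks those same steps (comparison with $N^{*,0}$ for extinction, acyclic boundary decomposition $\{O,(N^{*,0},0)\}$ for persistence) in more detail.
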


\begin{proof}
Thanks to the previous result, the proof in \cite[Proof of Theorem 3.2]{garrione2016} holds also in this case without modifications. As invariant and absorbing set for system \eqref{PP} we consider for a small $\varepsilon>0$ 
\begin{equation}\label{Omega}
{\cal K}'=\left\{(N,P)\in \mathbb{R}^2_{+}: 0\leq N\leq \overline K_++\varepsilon,\; N+\dfrac{P}{\bar{\gamma}}\leq (\overline{K}_++\varepsilon)\left(1+\dfrac{\overline r}{\underline \delta_1 }\right) \right \},
\end{equation}
with $\overline{r}:=\max_{t\in[0,T]}{k(t,\xi(t))}$. In fact, the axis are trajectories and we have that the vector field  $F(t,N,P)=(F_1(t,N,P),F_2(t,N,P))$ associated to \eqref{PP} satisfies:
$$F(t,N, P)_{|N=\overline K_++\varepsilon}\cdot (1,0)=F_1(t,\overline K_++\varepsilon, P)=k(t,\overline K_++\varepsilon)(\overline K_++\varepsilon) -f(t,\overline K_++\varepsilon,P)P< 0,$$ 
if $P>0,$ and, being for each $c\ge 0$ 
$$
\dfrac{\hat{P}_c}{\bar{\gamma}}=(\overline{K}_++\varepsilon)\left(1+\dfrac{\overline r}{\underline \delta_1 }\right)-N+c,$$
\begin{displaymath}
\begin{array}{lll}
F(t,N,\hat{P}_c)\cdot \left(1,\dfrac{1}{\overline{\gamma}}\right)&\le& 
\left( k(t,N)N -\dfrac{\delta_1}{\underline{\delta}_1}\overline r (\overline K_++\varepsilon)\right)- \delta_1 ((\overline K_++\varepsilon) -N+c)\\
&
-& \hat{P}_cf(t,N,\hat{P}_c)\left( 1-\dfrac{\gamma}{\bar{\gamma}} \right)
< 0.\end{array}
\end{displaymath}
As in \cite{garrione2016} we obtain the existence of $\eta>0$ such that $\liminf_{t\to +\infty} P(t)>\eta$ if $N(0)~>~0,$ $P(0)>0$. Now as in the mentioned paper easily follows the existence of $\delta>0$ such that  $\limsup_{t\to +\infty} N(t)>\delta$ if $N(0)>0,\; P(0)>0$ and by \cite[Theorem 1.3.3]{zhao2003}, we have that the system is uniformly persistent. 
\end{proof}

\begin{tr}
Under the assumptions of Theorem 2, part (b), there exists a non-trivial $T$-periodic solution $(N(t),P(t))$ to \eqref{PP} such that $N(t),\; P(t)>0$ for every $t\in[0,T]$.
\end{tr}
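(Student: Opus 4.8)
The plan is to realize the sought $T$-periodic solution as a fixed point of the $T$-Poincar\'e operator $Q$ associated with \eqref{PP}, the existence of such a fixed point in the interior of the phase space being forced by the uniform persistence already proved in Theorem \ref{teo_oer_weak}(b). This is the scheme of \cite{garrione2016}, which --- once Theorems \ref{th:wA0p} and \ref{teo_oer_weak} are available --- transfers to the present setting without changes.

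First I would fix the compact set $\mathcal{K}'$ of \eqref{Omega}, which by the computation in the proof of Theorem \ref{teo_oer_weak} is positively invariant and globally absorbing for \eqref{PP}; thus $Q(\mathcal{K}')\subseteq\mathcal{K}'$ and, by smoothness and continuous dependence on data, $Q$ is a continuous (indeed $C^1$) map of the compact set $\mathcal{K}'$ into itself, hence a compact map possessing a global attractor. Put $X:=\mathcal{K}'$, $X_0:=\{(N,P)\in\mathcal{K}':N>0,\ P>0\}$, which is open in $X$, and $\partial X_0:=X\setminus X_0=\mathcal{K}'\cap(\{N=0\}\cup\{P=0\})$. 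By (f1) and the form of \eqref{PP} the axes $\{N=0\}$ and $\{P=0\}$ are invariant, so $X_0$ and $\partial X_0$ are positively invariant for $Q$, and the only invariant sets contained in $\partial X_0$ are the origin $M_0=\{(0,0)\}$ and the predator-free orbit $M_1$ carried by the periodic solution $N^{*,0}$ of Theorem \ref{th:wA0p} with $\nu=0$.

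The central point is to read the conclusion of Theorem \ref{teo_oer_weak}(b) as the assertion that $Q$ is uniformly persistent with respect to $(X_0,\partial X_0)$: uniform persistence of \eqref{PP} means there is $\eta_0>0$ with $\liminf_{n\to\infty}\mathrm{dist}(Q^n(x),\partial X_0)\ge\eta_0$ for every $x\in X_0$, which in turn rests on $\{M_0,M_1\}$ being an isolated, acyclic covering of $\bigcup_{x\in\partial X_0}\omega(x)$ with $W^s(M_i)\cap X_0=\emptyset$ --- exactly the structure verified (via \cite[Theorem 1.3.3]{zhao2003}) in the proof of Theorem \ref{teo_oer_weak}(b). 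Granted this, I would conclude by a standard fixed-point result for uniformly persistent maps: either Horn's fixed point theorem applied to a triple of nested convex compacta built from the interior global attractor $A_0\subset X_0$ and a convex compact set containing $\mathcal{K}'$, as in \cite{garrione2016}, or \cite[Theorem 1.3.6]{zhao2003} directly. This yields $(N_0,P_0)\in X_0$ with $Q(N_0,P_0)=(N_0,P_0)$; the $T$-periodic solution of \eqref{PP} through $(N_0,P_0)$ is the required one, and $N(t),P(t)>0$ on $[0,T]$ since $X_0$ is positively invariant.

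I do not expect a hard analytic obstacle: the phase space being $\mathbb{R}^2_+$ with the compact absorbing set $\mathcal{K}'$, the compactness and point-dissipativity hypotheses of the abstract theorem hold automatically. The only genuine work --- and the step to carry out carefully --- is the bookkeeping that turns the continuous-time persistence and boundary-dynamics information of Theorems \ref{th:wA0p}--\ref{teo_oer_weak} into the discrete-time hypotheses on $Q$ (positive invariance of $X_0$, openness of $X_0$ in $X$, isolatedness and acyclicity of $\{M_0,M_1\}$, $\rho$-persistence of $Q$); after that the fixed point, and hence the nontrivial periodic orbit, drops out immediately.
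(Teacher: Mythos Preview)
Your proposal is correct and follows exactly the paper's approach: the paper's proof consists of a single sentence invoking \cite[Theorem 1.3.6]{zhao2003} applied to the Poincar\'e map on the invariant set $\mathcal{K}'$, and your write-up is a careful unpacking of precisely that application. The additional bookkeeping you describe (positive invariance of $X_0$, isolation and acyclicity of $\{M_0,M_1\}$, etc.) is implicit in the paper's appeal to the structure already established in Theorem \ref{teo_oer_weak}(b) via \cite[Theorem 1.3.3]{zhao2003}.
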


\begin{proof}
This result is a consequence of  \cite[Theorem 1.3.6]{zhao2003}, applied to the Poincar\'e map on the invariant set.
\end{proof}

\rem{The basic reproduction number $R_0$ \cite{georgescu07, garrione2016} of the system is the number of predators one predator gives rise during its life. In this case, when there are no predators, the prey population approximates the periodic solution  $N^{*,0}$. Hence we have  
$$R_0=\frac{\int_0^T\gamma(t)f(t,N^{*,0},0)dt}{\int_0^T\delta_1(t)dt}$$
and  Theorem \ref{teo_oer_weak} states that $R_0<1$ implies the extinction of the predators, while $R_0>1$ its persistence. In case of extinction we conclude that the prey population converges to the non-trivial periodic orbit in the predator-free space. In the following section we will see that the case of strong Allee effect is more delicate as there are two non-trivial periodic orbits in this space and the origin is stable.}

\begin{rem}It is worthwhile to note that the conditions obtained in Theorem \ref{teo_oer_weak} also holds in the autonomous case. As far as we know, in literature the autonomous predator--prey model with a weak Allee effect on the prey growth and a functional response satisfying hypothesis (f1)--(f5) is not studied (note that in \cite{sen2012bifurcation} the authors consider a singular ratio-dependent trophic singular function, which is excluded in the present study), and then the comparison of the obtained results is not possible.
\end{rem}

%%%%%%%%%%%%%%%%%%%%%%%%%%%%%%%%%%%%%%%%%%%%%%%%%%%%%%%%%
%%%%%%%%%%%%%%%%%%%%%%%%%%%%%%%%%%%%%%%%%%%%%%%%%%%%%%%%%
%%%%%%%%%%%%%%%%%%%%%%%%%%%%%%%%%%%%%%%%%%%%%%%%%%%%%%%%%

\section{Existence of periodic orbits with a strong Allee effect}\label{Sec:Strong}
Assuming now that the prey growth function describes a strong Allee effect (properties (gs1)--(gs4)), we can prove the following result  in absence of predators. This result generalizes an analogous one in \cite{rizaner12}.

\begin{tr}\label{th:sA0p}
Assume that the growth function satisfies the listed properties for a strong Allee effect (gs1)--(gs4). The dynamics is described by 
\begin{equation}\label{strong_preyonly}
\dot{N}=k(t,N)N,\qquad N\ge 0.
\end{equation} 
Then, equation \eqref{strong_preyonly}  admits
\begin{itemize}
\item the trivial solution (total extinction), which is locally asymptotically stable,
\item a positive, bounded and unstable $T$-periodic solution  $N^{-}(t)$,
\item a positive, bounded, locally asymptotically stable $T$-periodic solution $N^{+}(t)$. 
\end{itemize}
\end{tr}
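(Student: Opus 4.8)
The plan is to extract from (gs1)--(gs4) the sign pattern of $k$ and then argue throughout with the scalar, hence order-preserving, Poincar\'e map $P:=P^T$ of \eqref{strong_preyonly}. Indeed (gs1)--(gs2) give $k(t,N)<0$ for $N\in[0,\underline{K}_-)$ and for $N>\overline{K}_+$, $k(t,N)>0$ for $N\in(\overline{K}_-,\underline{K}_+)$, and $k(t,\underline{K}_+)\ge 0\ge k(t,\overline{K}_+)$, while (gs3)--(gs4) give $\partial_N k(t,\cdot)>0$ on $[0,\xi(t))$ and $\partial_N k(t,\cdot)<0$ on $(\xi(t),\infty)$, with $0<\underline{K}_-\le\overline{K}_-<\xi(t)<\underline{K}_+\le\overline{K}_+$. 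Local asymptotic stability of $N\equiv 0$ is then immediate: the variational equation at $0$ is $\dot n=k(t,0)\,n$, whose Floquet multiplier $\exp\!\big(\int_0^T k(t,0)\,dt\big)$ is strictly less than $1$ because $k(t,0)<0$ by (gs2).

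For the stable periodic orbit I would first note that $N\equiv\underline{K}_+$ and $N\equiv\overline{K}_+$ are, respectively, a lower and an upper solution of \eqref{strong_preyonly}, so the strip $[\underline{K}_+,\overline{K}_+]$ is positively invariant; being a continuous increasing self-map of this interval, $P$ has a fixed point, which gives a positive, bounded $T$-periodic solution $N^+$ with $\underline{K}_+\le N^+(t)\le\overline{K}_+$. Since $N^+(t)\ge\underline{K}_+>\xi(t)$, the map $k(t,\cdot)$ is decreasing along $N^+$, so the uniqueness argument in the proof of Theorem~\ref{th:wA0p} shows $N^+$ is the only $T$-periodic solution in this strip, whence every orbit starting in $[\underline{K}_+,\overline{K}_+]$ converges to it. Stability of $N^+$ follows from its multiplier as well: writing $\partial_N\!\big(k(t,N)N\big)=k(t,N)+N\,\partial_N k(t,N)$ and using $\int_0^T k(t,N^+(t))\,dt=0$ (which holds because $N^+$ is a positive $T$-periodic solution), the multiplier equals $\exp\!\big(\int_0^T N^+(t)\,\partial_N k(t,N^+(t))\,dt\big)<1$ since $\partial_N k(t,N^+(t))<0$. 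Hence $N^+$ is locally asymptotically stable.

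For the unstable periodic orbit I would locate it as the threshold separating the basin of $0$ from that of $N^+$, exploiting the three sign regions. On $(0,\underline{K}_-)$ one has $\dot N<0$, and an orbit there, being decreasing, stays in $(0,\underline{K}_-)$ over one period, so $P(x)<x$. On $(\overline{K}_-,\underline{K}_+)$ one has $\dot N>0$, and an orbit there is increasing and can only leave the interval through $\underline{K}_+$ (after which it remains in the invariant strip), so in either case $N_x(T)>N_x(0)$, i.e.\ $P(x)>x$. By the intermediate value theorem $P$ has a fixed point $x_0\in[\underline{K}_-,\overline{K}_-]$; let $N^-$ be the corresponding $T$-periodic solution, which is positive and bounded. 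One then checks $N^-(t)\le\overline{K}_-$ for all $t$: otherwise $N^-$ would enter $(\overline{K}_-,\underline{K}_+)$, be pushed monotonically into the forward-invariant strip $[\underline{K}_+,\overline{K}_+]$ and never return to $x_0\le\overline{K}_-$, contradicting periodicity. Finally, since $N^-(t)\le\overline{K}_-<\xi(t)$, (gs4) gives $\partial_N k(t,N^-(t))>0$, so (again using $\int_0^T k(t,N^-(t))\,dt=0$) the multiplier of $N^-$ equals $\exp\!\big(\int_0^T N^-(t)\,\partial_N k(t,N^-(t))\,dt\big)>1$, and $N^-$ is unstable.

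The step I expect to be the main obstacle is the construction and localisation of $N^-$: one must follow orbits across the three sign regions of $k$ --- using order-preservation of the scalar flow and the fact that the middle region $(\overline{K}_-,\underline{K}_+)$ can only be exited upward --- to see that $P-\mathrm{id}$ genuinely changes sign inside $[\underline{K}_-,\overline{K}_-]$ and that no periodic orbit can sit in $(\overline{K}_-,\underline{K}_+)$. A secondary technical point is the forward invariance of $[\underline{K}_+,\overline{K}_+]$ at the isolated instants where $k$ vanishes on its boundary, which is settled by the strict sign of $\partial_N k$ there. The remaining ingredients --- the stability of the origin and the two multiplier computations --- are routine.
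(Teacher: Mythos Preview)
Your argument is correct and essentially self-contained. The route differs from the paper's in two respects worth noting.

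For existence, the paper argues \`a la Hale: from the sign pattern of $k$ it deduces that every bounded positive orbit converges, as $t\to+\infty$ or $t\to-\infty$, to a $T$-periodic solution, and reads off $N^*_\pm$ as the resulting $\omega$- and $\alpha$-limits, localised in $[\underline{K}_\pm,\overline{K}_\pm]$. You instead work directly with the Poincar\'e map $P$, using sub/super-solutions for $N^+$ and the sign change of $P-\mathrm{id}$ across $[\underline{K}_-,\overline{K}_-]$ for $N^-$. Your approach is slightly more elementary and pinpoints exactly what is needed; the paper's gives the global phase portrait (basins of $0$ and $N^*_+$ separated by $N^*_-$) as a by-product.

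For stability, the paper does not compute multipliers inside this theorem; the (in)stability of $N^*_\pm$ is inferred from the asymptotic picture, and the explicit Floquet computation only appears later, in the linearisation of the full predator--prey system. Your multiplier computations (using $\int_0^T k(t,N^\pm(t))\,dt=0$ and the sign of $\partial_N k$ on either side of $\xi(t)$) make the stability statements in the theorem self-contained. Finally, the paper also proves \emph{uniqueness} of $N^*_-$ in $[\underline{K}_-,\overline{K}_-]$ via a time-reversal trick (so that $k$ becomes increasing and the argument of Theorem~\ref{th:wA0p} applies); you do not need this for the statement as written, but it is a nice complement if you want the full picture.
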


\begin{proof}
Our proof follows \cite[p.126--129]{hale2012}.

The trivial solution $N(t)\equiv 0$ is a $T$-periodic solution. Let us consider a solution with positive initial data $N(0)>0$.

Then if $t$ is such that $N(t)>\overline{K}_+$ or $0<N(t)<\underline{K}_-$, we have $\dot{N}(t)<0$; analogously, if  $\overline{K}_-<N(t)<\underline{K}_+$, then $\dot{N}(t)>0$. Therefore, any solution $N(t)$ with initial data $N(0)\ge 0$ must be bounded in the future.  Depending on the initial condition, one of the following cases holds:
\begin{itemize}
\item the solution $N(t)$ approaches zero as $t \to +\infty$ and approaches a $T$-periodic solution $N^{*}_-(t)$ as $t \to -\infty$,
\item the solution $N(t)$ approaches a $T$-periodic solution $N^{*}_-(t)$ as $t \to -\infty$ (we use the same notation as in the previous case and will see below that in fact there is only one solution in these conditions) and approaches a $T$-periodic solution $N^{*}_{+}(t)$ as $t \to +\infty$,
\item the solution $N(t)$ approaches $+\infty$ as $t \to -\infty$ and approaches a $T$-periodic solution $N^{*}_{+}(t)$ as $t \to +\infty$.
\end{itemize}
If $N(0)<\underline{K}_-$ the first one occurs, while if $\overline{K}_-< N(0)< \underline{K}_+ $ the second one holds. Finally if $N(0)>\overline{K}_+$ we are in the third case. 
Furthermore, these $T$-periodic solutions have the properties 
$$\underline{K}_-\leq N^{*}_-(t)\leq \overline{K}_-,\qquad \underline{K}_+\leq N^{*}_+(t)\leq \overline{K}_+,$$ 
and therefore they are positive.

We want now to prove the uniqueness of these $T$-periodic solutions in their existence intervals. Regarding $N^{*}_+(t)$ in $[ \underline{K}_+, \overline{K}_+]$, the same proof in the case of a weak Allee effect holds. We have now to prove that $N^{*}_-(t)$ is the unique $T$-periodic solution in $[\underline{K}_-, \overline{K}_-]$. Assuming that there are two such solutions $N^{*,-}_1(t)>N^{*,-}_2(t)$, we consider $w^{*,-}_i(t)=N^{*,-}_i(-t)$ two solutions of the time-reverse equation 
\begin{equation}\label{strong_preyonly_reverse}
\dot{w}(t)=-k(-t,w(t))w(t),
\end{equation} 
such that $v(t):=w^{*,-}_1(t) - w^{*,-}_2(t)>0$ for all $t\in[0,T]$.

% (Note that this hypothesis is not restrictive; if there exists $\hat{t}$ such that $N^{*,\nu}_1(\hat{t}) = N^{*,\nu}_2(\hat{t})=\hat{N}$, then $\hat{N}$ is an equilibrium point.)

Then we have
\begin{align*}
\dot{v}(t)&=\dot{w}^{*,-}_1(t)-\dot{w}^{*,-}_2(t)\\
              &=-k(-t,w^{*,-}_1(t)) w^{*,-}_1(t) +k(-t,w^{*,-}_2(t)) w^{*,-}_2(t)\\
              &=-k(-t,w^{*,-}_1(t)) \left(w^{*,-}_1(t)-w^{*,-}_2(t)\right)+ \left( k(-t,w^{*,-}_2(t)) -k(-t,w^{*,-}_1(t))\right)w^{*,-}_2(t).
\end{align*}
Since $k(-t,w)$ is increasing with respect to $w$ in $[\underline{K}_-, \overline{K}_-]$, we have that $ k(t,w^{*,-}_2(t)) -k(t,w^{*,-}_1(t))<0$, and then the same arguments of the previous case hold.
\end{proof}

We want now to obtain results for the predator--prey system. Let us begin by analyzing the behavior near the $(N^*_{\pm}(t),0)$.

\begin{tr}
Assume that the prey growth function and the functional response satisfy the listed properties for a strong Allee effect (gs1)--(gs4) and (f1)--(f5), respectively.  Then, the origin is always locally asymptotically stable. Moreover 
$$\lambda^\pm_1=\exp{\int_0^T \left({\frac{\partial k}{\partial N}(t,N^*_{\pm}(t))N^*_{\pm}(t)}\right)dt} \quad \mbox{ and }\quad\lambda^\pm_2= \exp{\int_0^T {\left(\gamma(t)f(t,N^*_{\pm}(t),0)-\delta_1(t)\right)}dt}$$
 are the eigenvalues of the monodromy matrix associated to the linearized system on the periodic solutions $(N^*_{\pm}(t),0).$ We have $\lambda^-_1>1>\lambda^+_1$ and hence $(N^*_-(t),0)$ is unstable while the stability of $(N^*_+(t),0)$ depends on the sign of  $\int_0^T\left(\gamma(t)f(t,N^*_+(t),0)-\delta_1(t)\right)dt.$
\end{tr}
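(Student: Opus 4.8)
The plan is to linearise \eqref{PP} along each predator-free periodic solution and read off the Floquet multipliers; thanks to (f1) (and to the fact that the predator equation carries a factor $P$), the variational system is upper triangular in all three cases, so the multipliers factor exactly as in the statement. Concretely, I would first write the Jacobian of the vector field $F=(F_1,F_2)$ of \eqref{PP} at a point $(\bar N,0)$, where $\bar N$ is the prey component of the periodic solution considered ($\bar N\equiv 0$ for the origin, $\bar N=N^*_\pm(t)$ for the nontrivial orbits). Using (f1), which gives $f(t,0,P)\equiv 0$ and in particular $f(t,0,0)=0$, one gets
\begin{displaymath}
DF(t,\bar N,0)=\begin{pmatrix}\dfrac{\partial k}{\partial N}(t,\bar N)\,\bar N+k(t,\bar N) & -f(t,\bar N,0)\\[0.3cm] 0 & \gamma(t)f(t,\bar N,0)-\delta_1(t)\end{pmatrix};
\end{displaymath}
the lower-left entry $\gamma(t)\dfrac{\partial f}{\partial N}(t,\bar N,0)\cdot 0$ vanishes, so the variational system along $(\bar N,0)$ is upper triangular. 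Hence the fundamental matrix solution $\Phi(t)$ with $\Phi(0)=I$ is upper triangular for all $t$, its diagonal entries being the exponentials of the integrals of the diagonal of $DF$; the eigenvalues of the monodromy matrix $\Phi(T)$ are therefore exactly these two numbers.

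Applying this to $(N^*_\pm(t),0)$, the $(2,2)$-entry yields at once $\exp\int_0^T(\gamma(t)f(t,N^*_\pm(t),0)-\delta_1(t))\,dt=\lambda^\pm_2$. For the $(1,1)$-entry I would use that $N^*_\pm$ solves \eqref{strong_preyonly}, so $\dot N^*_\pm/N^*_\pm=k(t,N^*_\pm)$, and integrating over a period with $N^*_\pm(T)=N^*_\pm(0)$ gives $\int_0^T k(t,N^*_\pm(t))\,dt=0$. Therefore $\int_0^T(\tfrac{\partial k}{\partial N}(t,N^*_\pm)N^*_\pm+k(t,N^*_\pm))\,dt=\int_0^T\tfrac{\partial k}{\partial N}(t,N^*_\pm(t))\,N^*_\pm(t)\,dt$, whose exponential is exactly $\lambda^\pm_1$; this proves the displayed formulas for the eigenvalues.

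Next I would determine the sign of $\lambda^\pm_1-1$. By Theorem \ref{th:sA0p} the orbits satisfy $\underline K_-\le N^*_-(t)\le\overline K_-$ and $\underline K_+\le N^*_+(t)\le\overline K_+$, while (gs3) gives $\overline K_-<\xi(t)<\underline K_+$; hence $N^*_-(t)<\xi(t)<N^*_+(t)$ for every $t$, and (gs4) forces $\tfrac{\partial k}{\partial N}(t,N^*_-(t))>0$ and $\tfrac{\partial k}{\partial N}(t,N^*_+(t))<0$. Since $N^*_\pm(t)>0$, the integral defining $\lambda^-_1$ is strictly positive and the one defining $\lambda^+_1$ is strictly negative, so $\lambda^-_1>1>\lambda^+_1$. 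Reading stability off the Poincar\'e map of \eqref{PP} via Floquet theory then gives: $(N^*_-(t),0)$ carries the multiplier $\lambda^-_1>1$ and is thus unstable; $(N^*_+(t),0)$ has $|\lambda^+_1|<1$, so it is locally asymptotically stable precisely when $|\lambda^+_2|<1$, i.e. when $\int_0^T(\gamma(t)f(t,N^*_+(t),0)-\delta_1(t))\,dt<0$, and unstable when this integral is positive.

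Finally, at the origin the same Jacobian with $\bar N\equiv 0$ becomes \emph{diagonal} (the $(1,2)$-entry is $-f(t,0,0)=0$ by (f1)), giving the decoupled variational equations $\dot u=k(t,0)u$ and $\dot v=-\delta_1(t)v$; the multipliers are $\exp\int_0^T k(t,0)\,dt$ and $\exp(-\int_0^T\delta_1(t)\,dt)$, both strictly less than $1$ because $k(t,0)<0$ by (gs2) and $\delta_1(t)>0$ by the standing assumption, so the origin is locally asymptotically stable irrespective of the remaining data. The whole argument is essentially a direct computation; the only points needing a little care are that the predator equation carries a factor $P$ (so the $(2,1)$-entry of the Jacobian vanishes on the whole predator-free axis, which is what yields triangularity and the clean factorisation of the multipliers) and the periodicity identity $\int_0^T k(t,N^*_\pm)\,dt=0$ used to pass from the $(1,1)$-entry to $\lambda^\pm_1$.
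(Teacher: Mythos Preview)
Your proof is correct and follows essentially the same route as the paper: linearise along the predator-free periodic solutions, observe the variational system is upper triangular (so the monodromy eigenvalues are the exponentials of the integrals of the diagonal entries), and use the periodicity relation $\int_0^T k(t,N^*_\pm(t))\,dt=0$ to obtain the stated form of $\lambda^\pm_1$. In fact you are slightly more explicit than the paper in justifying $\lambda^-_1>1>\lambda^+_1$ (via the location of $N^*_\pm$ relative to $\xi(t)$ and property (gs4)) and in noting why the Jacobian's $(2,1)$-entry vanishes on $\{P=0\}$, both of which the paper asserts without spelling out.
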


%\begin{itemize}
%\item[a)] the inequality
%            $$\int_0^T{\left(\gamma(t)f(t,N^{*,0},0)-\delta_1(t)\right)dt}<0$$
%            implies the extinction of predators, i.e. $P(t)\to 0$ for $t\to + \infty$. Moreover we have $|N(t)-N^{*,0}(t)|\to 0$ for $t\to + \infty$.
%\item[b)] the inequality
%            $$\int_0^T{\left(\gamma(t)f(t,N^{*,0},0)-\delta_1(t)\right)dt}>0$$
%            implies the uniform persistence of both predators and prey for \eqref{PP}.
%\end{itemize}

\begin{proof}
The linearized equation of system \eqref{PP} at $(0,0)$ is
\begin{align*}
\dot{u}&=k(t,0) u,\\
\dot{v}&=-\delta_1(t)v,
\end{align*}
then $(0,0)$ is always locally asymptotically stable.

The linearization of \eqref{PP} at $(N^*_{\pm}(t),0)$ is
\begin{align*}
\dot{u}&=\left(\frac{\partial k}{\partial N}(t,N^*_{\pm}(t))N^*_{\pm}(t)+k(t,N^*_{\pm}(t)) \right)u- f(t,N^*_{\pm}(t),0) v,\\
\dot{v}&=\left(\gamma(t)f(t,N^*_{\pm}(t),0)-\delta_1(t)\right) v.
\end{align*}
Let $U(t)$ be the canonical fundamental matrix, and set $t=T$. Then
$$
U(T)=\begin{pmatrix}
\displaystyle{\exp{\int_0^T \left( \frac{\partial k}{\partial N}(t,N^*_{\pm}(t))N^*_{\pm}(t)+k(t,N^*_{\pm}(t))\right)dt}} & *\\[0.2cm]
0 & \displaystyle{\exp{\int_0^T {\left(\gamma(t)f(t,N^*_{\pm}(t),0)-\delta_1(t)\right)}dt}}\\
\end{pmatrix},$$
where $*$ denotes some constant depending on $T$.

Now, using the fact that 
$$k(t,N^*_{\pm}(t))=\frac{\dot N^*_{\pm}(t)}{N^*_{\pm}(t)}$$
 and recalling that $N^*_{\pm}(t)$ are $T$-periodic, we conclude that the two eigenvalues of $U(T)$ are 
$$\lambda^\pm_1=\exp{\int_0^T {\left(\frac{\partial k}{\partial N}(t,N^*_{\pm}(t))N^*_{\pm}(t) \right)dt}}\quad \textnormal{and}\quad \lambda^\pm_2=\displaystyle{\exp{\int_0^T {\left(\gamma(t)f(t,N^*_{\pm}(t),0)-\delta_1(t)\right)}dt}}.$$ 
We easily conclude now that   $\lambda^-_1>1>\lambda^+_1$.
Then $(N^*_-(t),0)$ is always unstable, while the stability of $(N^*_+(t),0)$  is related to the eigenvalue $\lambda_2^+$.  
\end{proof}

\begin{rem}
With a strong Allee effect, when there are no predators, the prey population approximates either zero or the periodic solution $N^*_+(t)$. We cannot expect persistence in the first quadrant and obtain the existence of a periodic orbit as a consequence of persistence (as in the case of weak Allee effect).  Thus we will  use other techniques.
\end{rem}

We can also note that 
$$\int_0^T {\left(\gamma(t)f(t,N^*_-(t),0)-\delta_1(t)\right)}dt < \int_0^T {\left(\gamma(t)f(t,N^*_+(t),0)-\delta_1(t)\right)}dt,$$
then the following cases are possible:
\begin{itemize}
\item[a)] $\lambda^-_2<\lambda^+_2<1$,
\item[b)] $\lambda^-_2<1<\lambda^+_2$,
\item[c)] $1<\lambda^-_2<\lambda^+_2$.
\end{itemize}

We follow \cite[Thm.3]{zhidong1999}. Define the Poincar\'e map $P_T:\mathbb{R}_+^2\to \mathbb{R}_+^2$ as follows: for any $X_0\in \mathbb{R}_+^2$, let $X(t,X_0)$ be the solution of system \eqref{PP} with initial condition $X(0,X_0)=X_0$, then $P_T(X_0)=X(T,X_0)$. We have that $P_T$ is a continuous map on $\mathbb{R}_+^2$. Furthermore, as in the proof of Theorem \ref{teo_oer_weak}, for a small $\varepsilon>0$ the set ${\cal K}'$ defined in \eqref{Omega}, 
%$$\Omega=\left\{(N,P)\in \mathbb{R}^2_{+}: 0\leq N\leq \overline K_++\varepsilon,\; N+\dfrac{P}{\bar{\gamma}}\leq (\overline{K}_++\varepsilon)\left(1+\dfrac{\overline r}{\underline \delta_1 }\right) \right \},$$
%with $\overline{r}:=\max_{t\in[0,T]}{k(t,\xi(t))}$, 
is a positive invariant set for system \eqref{PP} in case of a strong Allee effect. By reflecting the picture with respect to the $N$ and $P$ axes, we extend the map $P_T$ from $Q$ to $S:=\{(N,P)\in \mathbb{R}^2: (|N|,|P|)\in {\cal K}'\}$ which is a convex set. Note that the positive $N$ and $P$ axes are invariant sets of system \eqref{PP}. We have that $P_T$ is continuous on $S$, $P_T(S)\subset S$ and  there are no fixed points of $P_T$ in the boundary of $S$. By the Leray--Schauder principle \cite{Zeidler1986} we have 
$$deg(I-P_T,S,(0,0))=1,$$
where $deg$ is the Brouwer degree in the plane \cite{Zeidler1986} and $I$ is the identical map. We can also note that in $S$ we have at least $5$ fixed points of $P_T$:
\begin{itemize}
\item a fixed point corresponds to $(0,0)$,
\item two fixed points correspond to $(N^*_-(t),0)$ and $(-N^*_-(t),0)$,
\item two fixed points correspond to $(N^*_+(t),0)$ and $(-N^*_+(t),0)$,
\item other fixed points, corresponding to positive $T$-periodic positive solutions, could be present.
\end{itemize}

Since $(0,0)$ is always a stable node \cite{krasnosel1968}, the index of the fixed point \cite{Zeidler1986} $i(P_T,(0,0))=1$. Then we have the following cases:
\begin{itemize}
\item[a)] $\lambda^-_2<\lambda^+_2<1$: the indexes of $P_T$ at $(N^*_-(t),0)$ and $(N^*_+(t),0)$ are $-1,\;1$, respectively (Figure \ref{case1}). 
\item[b)] $\lambda^-_2<1<\lambda^+_2$: the indexes of $P_T$ at $(N^*_-(t),0)$ and $(N^*_+(t),0)$ are $-1,\;-1$, respectively (Figure \ref{case2}). 
%\notamargine{qui c'è qualcosa, non sappiamo quanto e sarebbe da approfondire il discorso del libro Yokubovich.}
\item[c)] $1<\lambda^-_2<\lambda^+_2$: the indexes of $P_T$ at $(N^*_-(t),0)$ and $(N^*_+(t),0)$ are $1,\;-1$, respectively (Figure \ref{case3}).  
%\notamargine{qui non possiamo concludere! Ma non dovremmo trovare altre orbite}
\end{itemize}

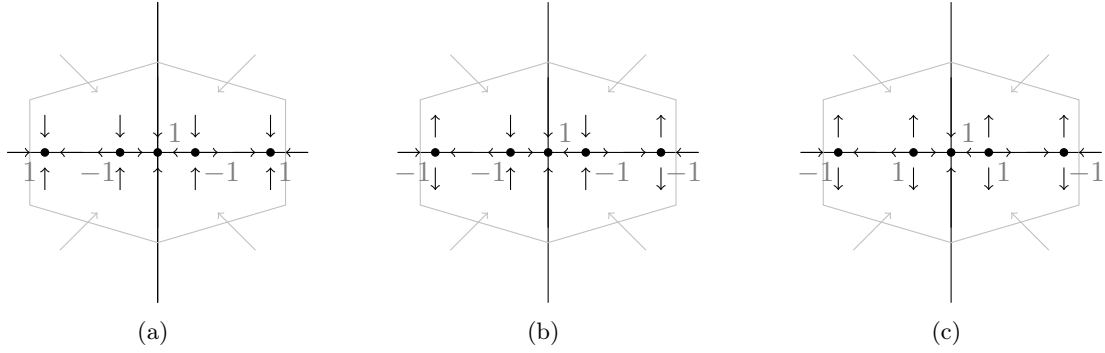
\begin{figure}[!ht]
\subfigure[\label{case1}]{
\begin{tikzpicture}[scale=1]
\draw (0,2) -- (0,-2);
\draw (-2,0) -- (2,0);

\draw (0,2) -- (0,-2);
\draw[gray!50!white] (-1.7,-0.7) -- (-1.7,0.7) -- (0,1.2) -- (1.7,0.7) -- (1.7,-0.7) -- (0,-1.2) -- (-1.7,-0.7);
\draw [->, gray!50!white] (1.3,1.3) -- (0.8,0.8);
\draw [->, gray!50!white] (-1.3,1.3) -- (-0.8,0.8);
\draw [->, gray!50!white] (1.3,-1.3) -- (0.8,-0.8);
\draw [->, gray!50!white] (-1.3,-1.3) -- (-0.8,-0.8);

\draw [->] (0,1) -- (0,0.2); % asse y
\draw [->] (0.5,0) -- (0.2,0); 
\draw [->] (0.5,0.5) -- (0.5,0.2);
\draw [->] (0.5,-0.5) -- (0.5,-0.2);
\draw [->] (0.5,0) -- (0.8,0); 
\draw [->] (2,0) -- (1.7,0);
\draw [->] (1,0) -- (1.3,0);
\draw [->] (1.5,0.5) -- (1.5,0.2);
\draw [->] (1.5,-0.5) -- (1.5,-0.2);

\draw [->] (0,-1) -- (0,-0.2);%
\draw [->] (-0.5,0) -- (-0.2,0);
\draw [->] (-0.5,0.5) -- (-0.5,0.2);%
\draw [->] (-0.5,-0.5) -- (-0.5,-0.2);%
\draw [->] (-0.5,0) -- (-0.8,0);
\draw [->] (-2,0) -- (-1.7,0);
\draw [->] (-1,0) -- (-1.3,0);
\draw [->] (-1.5,0.5) -- (-1.5,0.2);%
\draw [->] (-1.5,-0.5) -- (-1.5,-0.2);%

\draw [fill] (0,0) circle [radius=0.05];
\draw [fill] (0.5,0) circle [radius=0.05];
\draw [fill] (1.5,0) circle [radius=0.05];
\draw [fill] (-0.5,0) circle [radius=0.05];
\draw [fill] (-1.5,0) circle [radius=0.05];

\node [above right] at (0,0) {\color{gray}$1$};
\node [below] at (-1.7,0) {\color{gray}$1$};
\node [below] at (-0.8,0) {\color{gray}$-1$};
\node [below] at (1.7,0) {\color{gray}$1$};
\node [below] at (0.85,0) {\color{gray}$-1$};
\end{tikzpicture}
}
\hspace{0.5cm}
\subfigure[\label{case2}]{
\begin{tikzpicture}[scale=1]
\draw (0,2) -- (0,-2);
\draw (-2,0) -- (2,0);

\draw[gray!50!white] (-1.7,-0.7) -- (-1.7,0.7) -- (0,1.2) -- (1.7,0.7) -- (1.7,-0.7) -- (0,-1.2) -- (-1.7,-0.7);
\draw [->, gray!50!white] (1.3,1.3) -- (0.8,0.8);
\draw [->, gray!50!white] (-1.3,1.3) -- (-0.8,0.8);
\draw [->, gray!50!white] (1.3,-1.3) -- (0.8,-0.8);
\draw [->, gray!50!white] (-1.3,-1.3) -- (-0.8,-0.8);

\draw [->] (0,1) -- (0,0.2);
\draw [->] (0.5,0) -- (0.2,0);
\draw [->] (0.5,0.5) -- (0.5,0.2);
\draw [->] (-0.5,0.5) -- (-0.5,0.2);
\draw [->] (0.5,0) -- (0.8,0);
\draw [->] (2,0) -- (1.7,0);
\draw [->] (1,0) -- (1.3,0);
\draw [->] (1.5,0.2) -- (1.5,0.5);
\draw [->] (-1.5,0.2) -- (-1.5,0.5);

\draw [->] (0,-1) -- (0,-0.2);
\draw [->] (-0.5,0) -- (-0.2,0);
\draw [->] (0.5,-0.5) -- (0.5,-0.2);
\draw [->] (-0.5,-0.5) -- (-0.5,-0.2);
\draw [->] (-0.5,0) -- (-0.8,0);
\draw [->] (-2,0) -- (-1.7,0);
\draw [->] (-1,0) -- (-1.3,0);
\draw [->] (1.5,-0.2) -- (1.5,-0.5);
\draw [->] (-1.5,-0.2) -- (-1.5,-0.5);

\draw [fill] (0,0) circle [radius=0.05];
\draw [fill] (0.5,0) circle [radius=0.05];
\draw [fill] (1.5,0) circle [radius=0.05];
\draw [fill] (-0.5,0) circle [radius=0.05];
\draw [fill] (-1.5,0) circle [radius=0.05];

\node [above right] at (0,0) {\color{gray}$1$};
\node [below] at (-1.8,0) {\color{gray}$-1$};
\node [below] at (-0.8,0) {\color{gray}$-1$};
\node [below] at (1.8,0) {\color{gray}$-1$};
\node [below] at (0.85,0) {\color{gray}$-1$};
\end{tikzpicture}       
}    
\hspace{0.5cm}
\subfigure[\label{case3}]{
\begin{tikzpicture}[scale=1]
\draw (0,2) -- (0,-2);
\draw (-2,0) -- (2,0);

\draw[gray!50!white] (-1.7,-0.7) -- (-1.7,0.7) -- (0,1.2) -- (1.7,0.7) -- (1.7,-0.7) -- (0,-1.2) -- (-1.7,-0.7);
\draw [->, gray!50!white] (1.3,1.3) -- (0.8,0.8);
\draw [->, gray!50!white] (-1.3,1.3) -- (-0.8,0.8);
\draw [->, gray!50!white] (1.3,-1.3) -- (0.8,-0.8);
\draw [->, gray!50!white] (-1.3,-1.3) -- (-0.8,-0.8);

\draw [->] (0,1) -- (0,0.2);
\draw [->] (0.5,0) -- (0.2,0);
\draw [->] (0.5,0.2) -- (0.5,0.5);
\draw [->] (0.5,-0.2) -- (0.5,-0.5);
\draw [->] (0.5,0) -- (0.8,0);
\draw [->] (2,0) -- (1.7,0);
\draw [->] (1,0) -- (1.3,0);
\draw [->] (1.5,0.2) -- (1.5,0.5);
\draw [->] (1.5,-0.2) -- (1.5,-0.5);

\draw [->] (0,-1) -- (0,-0.2);
\draw [->] (-0.5,0) -- (-0.2,0);
\draw [->] (-0.5,0.2) -- (-0.5,0.5);
\draw [->] (-0.5,-0.2) -- (-0.5,-0.5);
\draw [->] (-0.5,0) -- (-0.8,0);
\draw [->] (-2,0) -- (-1.7,0);
\draw [->] (-1,0) -- (-1.3,0);
\draw [->] (-1.5,0.2) -- (-1.5,0.5);
\draw [->] (-1.5,-0.2) -- (-1.5,-0.5);

\draw [fill] (0,0) circle [radius=0.05];
\draw [fill] (0.5,0) circle [radius=0.05];
\draw [fill] (1.5,0) circle [radius=0.05];
\draw [fill] (-0.5,0) circle [radius=0.05];
\draw [fill] (-1.5,0) circle [radius=0.05];
\node [above right] at (0,0) {\color{gray}$1$};
\node [below] at (-1.8,0) {\color{gray}$-1$};
\node [below] at (-0.7,0) {\color{gray}$1$};
\node [below] at (1.8,0) {\color{gray}$-1$};
\node [below] at (0.7,0) {\color{gray}$1$};
\end{tikzpicture}       
}
\caption{Graphical representation, where $1$ and $-1$ are the the fixed point indexes associated to the corresponding periodic solutions.}\label{threeCases}
\end{figure}

At first we state a result about extinction
\begin{tr}
Assume that the prey growth function and the functional response satisfy the listed properties (strong Allee effect).  If  $\lambda^-_2<\lambda^+_2<1$, then there is extinction of the predators.
\end{tr}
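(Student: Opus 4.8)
The plan is to argue directly, without the degree-theoretic machinery set up above, by combining a comparison principle for the prey equation with a differential-inequality (Gronwall-type) estimate for the predator equation, using only the monotonicity hypotheses (f3)--(f4) on $f$ and the description of the predator-free dynamics in Theorem \ref{th:sA0p}. Let $(N(t),P(t))$ be a solution of \eqref{PP} with $N(0)\ge 0$, $P(0)>0$ (the case $P(0)=0$ is trivial since then $P\equiv 0$); as in the proof of Theorem \ref{teo_oer_weak} such a solution is defined for all $t\ge 0$ and bounded, and $P(t)>0$ for all $t$ because $\{P=0\}$ is invariant.

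The first step is to establish the asymptotic upper bound $\limsup_{t\to+\infty}\big(N(t)-N^*_+(t)\big)\le 0$. Since $f\ge 0$ and $P\ge 0$, the first equation of \eqref{PP} gives $\dot N\le k(t,N)N$, so by the scalar comparison theorem $N(t)\le M(t)$ for all $t\ge 0$, where $M$ solves the predator-free equation \eqref{strong_preyonly} with $M(0)=N(0)$. By Theorem \ref{th:sA0p}, $M$ is bounded and converges, as $t\to+\infty$, to one of the $T$-periodic solutions $0$, $N^*_-(t)$ or $N^*_+(t)$; since $0<\underline{K}_+\le N^*_+(t)$ and $N^*_-(t)\le\overline{K}_-<\underline{K}_+\le N^*_+(t)$, in every case
$$\limsup_{t\to+\infty}\big(M(t)-N^*_+(t)\big)\le 0,$$
and hence the same holds for $N$. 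In particular, for each $\varepsilon>0$ there is $t_\varepsilon\ge 0$ with $N(t)\le N^*_+(t)+\varepsilon$ for all $t\ge t_\varepsilon$.

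Next I would pick $\varepsilon$ suitably. By continuity of $f$ on compact sets, $\Lambda_\varepsilon:=\int_0^T\big(\gamma(t)f(t,N^*_+(t)+\varepsilon,0)-\delta_1(t)\big)\,dt$ depends continuously on $\varepsilon$, with $\Lambda_0=\log\lambda^+_2<0$ by hypothesis; fix $\varepsilon>0$ with $\Lambda_\varepsilon<0$. For $t\ge t_\varepsilon$, using (f3) ($f$ non-increasing in $P$), then $\delta_2\ge 0$, then (f4) ($f$ non-decreasing in $N$) together with $N(t)\le N^*_+(t)+\varepsilon$,
$$\frac{\dot P(t)}{P(t)}=\gamma(t)f(t,N(t),P(t))-\delta_1(t)-\delta_2(t)P(t)\le \gamma(t)f(t,N^*_+(t)+\varepsilon,0)-\delta_1(t)=:h(t),$$
a continuous $T$-periodic function with $\int_0^T h=\Lambda_\varepsilon<0$. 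Integrating on $[t_\varepsilon,t_\varepsilon+nT]$ gives $P(t_\varepsilon+nT)\le P(t_\varepsilon)\,e^{\,n\Lambda_\varepsilon}\to 0$, and since $|h|$ is bounded, $P(t)\le e^{\,T\|h\|_\infty}P(t_\varepsilon+nT)$ on each interval $[t_\varepsilon+nT,t_\varepsilon+(n+1)T]$; hence $P(t)\to 0$ as $t\to+\infty$, i.e.\ extinction of the predators. Once $P$ tends to $0$ the prey equation becomes asymptotically governed by \eqref{strong_preyonly}, so $N(t)$ in turn approaches a predator-free periodic state.

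The step I expect to be the main obstacle is the first one, namely upgrading the \emph{qualitative} trichotomy of Theorem \ref{th:sA0p} to the \emph{uniform-in-$t$} bound $\limsup_t\big(N(t)-N^*_+(t)\big)\le 0$: this refinement is genuinely needed, because the cruder bound $N(t)\lesssim\overline{K}_+$ would not suffice, $\int_0^T(\gamma f(t,\overline{K}_+,0)-\delta_1)\,dt$ being possibly nonnegative even when $\lambda^+_2<1$. Everything after Step 1 is routine. Finally, note that in the hypothesis $\lambda^-_2<\lambda^+_2<1$ the first inequality always holds (it follows from $N^*_-(t)<N^*_+(t)$ and (f4)), so only $\lambda^+_2<1$ is actually used, and the intra-specific term $\delta_2P^2$ enters merely as a non-positive contribution.
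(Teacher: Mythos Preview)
Your argument is correct and follows essentially the same route as the paper's proof: compare $N$ with the solution of the predator-free equation \eqref{strong_preyonly} sharing the same initial datum, use Theorem~\ref{th:sA0p} to get $N(t)\le N^*_+(t)+\varepsilon$ for large $t$, then feed this into the predator equation via (f3)--(f4) and integrate. You are more explicit than the paper in two places---the continuity argument selecting $\varepsilon$ with $\Lambda_\varepsilon<0$, and the passage from $P(t_\varepsilon+nT)\to 0$ to $P(t)\to 0$---and your closing remarks (that only $\lambda^+_2<1$ is really used, and that $\delta_2P^2$ enters only with the favorable sign) are accurate observations the paper leaves implicit.
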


\begin{proof}
We consider a solution $(N,P)$ of \eqref{PP}. From the first equation we have that 
\begin{equation}
N'\le k(t,N)N.
\label{ineq1}
\end{equation}
Let us consider the solution of  $N_1'=k(t,N_1)N_1$ such that $N_1(0)=N(0)$. For each $\varepsilon>0$  there exists a $t_\varepsilon$ such that $N_1(t)\le N^*_+(t)+\varepsilon$ for each $t\ge t_\varepsilon$. Using the monotonicity of $f$ with respect to $N$ we have that for each $t\ge t_\varepsilon$ 
\begin{equation}\label{ineqPP}
\begin{cases}
N(t) \le N_1(t),\\
\dot{P}(t)\le\gamma(t)f(t,N^*_+(t)+\varepsilon,0)P(t)-\delta_1(t)P(t).
\end{cases}
\end{equation}
Since $\lambda^+_2<1$ and choosing $\varepsilon$ sufficiently small, we have that $\lim_{t\to +\infty} P(t)=~0$.
\end{proof}

\begin{rem}
In this case two different outcomes are possible when $t\to+\infty$, depending on the initial conditions: we can have the total extinction or the $N$-component of the solution tends to the $T$-periodic orbit $N_+^*(t)$.
\end{rem}

\noindent 
A first result on existence of positive $T$-periodic orbits is the following.

\begin{tr}
Assume that the prey growth function and the functional response satisfy the listed properties (strong Allee effect).  If $\lambda^-_2<1<\lambda^+_2$ then there exists at least a positive $T$- periodic orbit $(N^*,P^*)$. 
\end{tr}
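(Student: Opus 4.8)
The plan is to argue by degree theory on the extended Poincar\'e map $P_T\colon S\to S$, in the spirit of \cite[Thm.~3]{zhidong1999}, using the fixed--point indices already collected for case (b) (see Figure \ref{case2}). Recall that $S$ is bounded and convex, $P_T(S)\subset S$, and $P_T$ has no fixed point on $\partial S$, so the Leray--Schauder principle gives $\deg(I-P_T,S,(0,0))=1$. I would then enumerate all fixed points of $P_T$ on the coordinate axes, add up their indices, and observe that the sum is different from $1$; by the additivity of the Brouwer degree this forces additional fixed points, necessarily off the axes, and the reflection symmetry of the construction localises one of them in the open first quadrant, which will be the sought strictly positive $T$-periodic orbit.

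First I would check that the only fixed points of $P_T$ on the axes are the five already listed: on the $P$-axis one has $\dot P=-\delta_1(t)P-\delta_2(t)P^2<0$ for $P>0$, so the origin is the only fixed point there (and likewise on the negative $P$-axis, by the reflected construction); on the $N$-axis a fixed point corresponds to a positive $T$-periodic solution of $\dot N=k(t,N)N$, which by Theorem \ref{th:sA0p} is $N^*_-(t)$ or $N^*_+(t)$, together with their reflections $-N^*_\pm(t)$. Since $\lambda^-_2<1<\lambda^+_2$ and always $\lambda^+_1<1<\lambda^-_1$, none of the monodromy eigenvalues equals $1$, so each of these fixed points is nondegenerate and $i(P_T,x)=\operatorname{sign}\det(I-U(T))=\operatorname{sign}\big((1-\lambda_1)(1-\lambda_2)\big)$, where $U(T)=DP_T(x)$ is the monodromy matrix (the equality $DP_T(x)=U(T)$ being the content of the variational equation). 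This yields index $+1$ at $(0,0)$ (a stable node, since $k(t,0)<0$) and index $-1$ at each of $(\pm N^*_\pm(t),0)$, exactly as in Figure \ref{case2}; hence the axis fixed points contribute total index $1-1-1-1-1=-3$.

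Next I would use excision. Pick small, pairwise disjoint open neighbourhoods $V_0,\dots,V_4$ in $S$ of the five axis fixed points $x_0,\dots,x_4$, containing no other fixed point, and set $\Sigma:=S\setminus\bigcup_{j=0}^{4}\overline{V_j}$. Then
\[
1=\deg(I-P_T,S,(0,0))=\sum_{j=0}^{4} i(P_T,x_j)+\deg\big(I-P_T,\Sigma,(0,0)\big)=-3+\deg\big(I-P_T,\Sigma,(0,0)\big),
\]
so $\deg(I-P_T,\Sigma,(0,0))=4\neq 0$. Since every fixed point of $P_T$ in $\Sigma$ lies in one of the four open quadrants and, by the reflected construction, $P_T$ commutes with the two coordinate reflections, the degrees of $I-P_T$ over the four open-quadrant parts of $\Sigma$ are equal and sum to $4$; in particular the open-first-quadrant part has degree $1\neq 0$ and therefore contains a fixed point $X_0=(N_0,P_0)$ with $N_0,P_0>0$. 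Because the coordinate axes are invariant for \eqref{PP}, the $T$-periodic solution $(N^*(t),P^*(t))$ through $X_0$ stays in the open first quadrant for all $t\in[0,T]$, which is the desired orbit.

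The main obstacle is not a single hard estimate — the analytic ingredients (boundedness and invariance of ${\cal K}'$, the classification of predator-free periodic orbits in Theorem \ref{th:sA0p}, and the monodromy eigenvalues) are all already in place — but the bookkeeping needed to make the degree count rigorous: continuity of the reflected map across the axes, the absence of any further (and possibly degenerate) fixed points on the axes, and the check that $\partial\Sigma$ and the boundaries of the quadrant parts carry no fixed points, so that every degree written above is well defined. A second point to state explicitly is the identification $DP_T(x)=U(T)$ at the boundary equilibria, which is what lets one read the indices off from $\lambda^\pm_1$ and $\lambda^\pm_2$.
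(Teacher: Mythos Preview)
Your proposal is correct and follows essentially the same approach as the paper: both use the global degree $\deg(I-P_T,S,(0,0))=1$, the index count $1-1-1-1-1=-3$ at the five axis fixed points in case (b), and the reflection symmetry to place one of the remaining fixed points (total contribution $4$, one per quadrant) in the open first quadrant. You have simply written out in detail the bookkeeping (excision, nondegeneracy via $\lambda^\pm_1,\lambda^\pm_2\neq 1$, absence of further axis fixed points) that the paper leaves implicit.
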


\begin{proof} 
%This result follows the technique used in  \cite[Proof of Theorem 3]{zhidong1999}.
This result is an immediate consequence of the indexes of the fixed points of $P_T$  on the $N$-axis $(\pm N^*_{\pm}(0),0)$ and $(0,0)$  taking into account that $deg(I-P_T,S,(0,0))=1.$ In fact either there are infinite $T$-periodic orbits or there is at least one positive periodic orbit with a positive index (four in total, one in each quadrant).
\end{proof}

In  Section \ref{Sec:NumRes} we present some simulations which illustrate the existence of this periodic orbit. In the simulations it seems that the orbit can be stable or unstable, depending on a parameter.

From \cite[Theorem 2 in Section 2.5]{ortega1995} the degree of a Lyapunov stable and isolated periodic orbit is one as the Poincar\'e map is orientation preserving. 
It seems from our simulations that in case
$1<\lambda^-_2<\lambda^+_2$ there are no positive periodic orbits. But as a consequence of this remark, if this is not true and there is not an in an infinite number positive periodic orbits, if there is one stable then there is one unstable.

\vspace{3mm}
We state now a lemma in which we give a condition for the instability of positive $T$-periodic solutions. The case of stability would conduct us to a much more complex condition and hence we do not state it. 

\begin{lm}
Let $(N^*(t), P^*(t))$ be a positive $T$-periodic solution of \eqref{PP}and set 
{\small $$\alpha=\int_0^T \left(\frac{\partial k}{\partial N}(t,N^*)N^*-\frac{\partial f}{\partial N}(t,N^*,P^*)P^*+f(t,N^*,P^*)\frac{P^*}{N^*}+ \gamma(t)\frac{\partial f}{\partial P}(t,N^*,P^*){P^*}-\delta_2(t)P^*\right) dt.$$}
If $\alpha>0$ the periodic solution is unstable.
%\item[b)] If $\alpha<0$ the periodic solution is stable if
%$\beta< cosh(\frac{1}{2}T\alpha)$  and unstable otherwise. Here we denote by $\beta$ the absolute value %of the characteristic constant of the linearized system in the periodic orbit.
%\end{itemize}
\label{alpha}
\end{lm}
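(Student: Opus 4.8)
The plan is to recognise $\alpha$ as exactly $\int_0^T \operatorname{tr}A(t)\,dt$, where $A(t)$ is the Jacobian of the vector field of \eqref{PP} along the periodic orbit, and then to combine Liouville's formula with the principle of linearized instability.

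First I would write the right-hand side of \eqref{PP} as $F=(F_1,F_2)$ and compute $A(t)=DF(t,N^*(t),P^*(t))$. Only the diagonal entries matter; they are
$$A_{11}=\frac{\partial k}{\partial N}(t,N^*)N^*+k(t,N^*)-\frac{\partial f}{\partial N}(t,N^*,P^*)P^*,\qquad A_{22}=\gamma\frac{\partial f}{\partial P}(t,N^*,P^*)P^*+\gamma f-\delta_1-2\delta_2 P^*,$$
so that
$$\operatorname{tr}A(t)=\frac{\partial k}{\partial N}N^*+k-\frac{\partial f}{\partial N}P^*+\gamma\frac{\partial f}{\partial P}P^*+\gamma f-\delta_1-2\delta_2 P^*.$$
Next I would use the two equations defining the periodic orbit: from the first equation of \eqref{PP}, $k(t,N^*)=\dot N^*/N^*+f(t,N^*,P^*)P^*/N^*$, and from the second, $\gamma f-\delta_1=\dot P^*/P^*+\delta_2 P^*$. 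Substituting these into $\operatorname{tr}A$ (note in particular that $\gamma f-\delta_1-2\delta_2P^*=\dot P^*/P^*-\delta_2P^*$) gives
$$\operatorname{tr}A(t)=\frac{\dot N^*}{N^*}+\frac{\dot P^*}{P^*}+\frac{\partial k}{\partial N}N^*+f\frac{P^*}{N^*}-\frac{\partial f}{\partial N}P^*+\gamma\frac{\partial f}{\partial P}P^*-\delta_2 P^*.$$
Integrating over $[0,T]$, the terms $\dot N^*/N^*$ and $\dot P^*/P^*$ contribute $\log(N^*(T)/N^*(0))=0$ and $\log(P^*(T)/P^*(0))=0$ by the positivity and $T$-periodicity of $N^*,P^*$, and what remains is precisely $\alpha$.

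Finally, by Liouville's (Abel's) formula the determinant of the monodromy matrix of the variational equation equals $\exp\!\big(\int_0^T\operatorname{tr}A(t)\,dt\big)=e^{\alpha}$, so the product of the two Floquet multipliers $\mu_1,\mu_2$ is $e^{\alpha}$. If $\alpha>0$ then $|\mu_1|\,|\mu_2|=e^{\alpha}>1$, hence $\max(|\mu_1|,|\mu_2|)\ge e^{\alpha/2}>1$, i.e.\ at least one Floquet multiplier lies outside the closed unit disc. By the principle of linearized instability for $T$-periodic solutions of $T$-periodic planar systems (see e.g.\ \cite{hale2012}), the orbit $(N^*(t),P^*(t))$ is then unstable, which is the claim.

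The argument is elementary; there is no real obstacle. The only points requiring a little care are the bookkeeping in simplifying $\operatorname{tr}A(t)$ — in particular pairing the $-2\delta_2P^*$ term correctly against $\gamma f-\delta_1=\dot P^*/P^*+\delta_2P^*$ so that exactly one $\delta_2P^*$ survives — and invoking the appropriate version of the linearized-instability theorem for periodic orbits (equivalently, instability of the corresponding fixed point of the Poincaré map when it has an eigenvalue of modulus greater than one).
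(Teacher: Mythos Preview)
Your proof is correct and follows essentially the same approach as the paper: both identify $\alpha$ with $\int_0^T \operatorname{tr}$ of the linearization along the orbit and conclude via Liouville's formula that the product of the Floquet multipliers equals $e^{\alpha}>1$. The only cosmetic difference is that the paper first passes to the variables $u=N/N^*-1,\ v=P/P^*-1$, which automatically absorbs the $\dot N^*/N^*$ and $\dot P^*/P^*$ terms so that the trace of the transformed linearization is already the integrand of $\alpha$, whereas you linearize in the original variables and remove those terms afterwards using periodicity.
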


\begin{proof}
We consider the following change of variables 
$$u=\frac{N}{N^*}-1\,\,\,v=\frac{P}{P^*}-1,$$
which transforms the solution $(N^*(t), P^*(t))$ to the origin. Then we linearize the system  at $(0,0)$ and we get 
\begin{align*}
\dot{u}&=\left(\frac{\partial k}{\partial N}(t,N^*)N^*-\frac{\partial f}{\partial N}(t,N^*,P^*)P^*+f(t,N^*,P^*)\frac{P^*}{N^*}\right) u-\\
&- \left(\frac{\partial f}{\partial P}(t,N^*,P^*)\frac{{P^*}^2}{N^*}+f(t,N^*,P^*)\frac{P^*}{N^*} \right)v,\\
\\
\dot{v}&=\left(\gamma(t)\frac{\partial f}{\partial N}(t,N^*,P^*)N^*\right)u+\left(\gamma(t)\frac{\partial f}{\partial P}(t,N^*,P^*)P^*-\delta_2 P^*\right) v,
\end{align*}
where $N^*$ and $P^*$ are evaluated at $t$.
The result is an immediate consequence of Liouville's formula, see also \cite[II, Chapter 7, 3]{YS}
\end{proof}

This result can be specialized in the case of a Gilpin prey-growth function and a Holding-type II functional response (widely used in literature), obtaining the following result.
\begin{cor}\label{alphaH}
Assume that 
%$k(t,N)=a(t)(N-K_-(t))(K_+(t)-N)$ and 
%$\displaystyle f(t,N,P)=\frac{m(t)N}{1+m(t)h(t)N}$ where $a,\;m,\; h$ are non-negative $T$-periodic continuous functions.
$$k(t,N)=r(t)(N-K_-(t))(K_+(t)-N) \qquad and \qquad f(t,N,P)=\dfrac{b(t)p(t)N}{1+p(t)N}.$$
Let $(N^*(t), P^*(t))$ be a positive $T$-periodic solution of \eqref{PP}.
If
%$$\int_0^T a(t)(K_+(t)+K_-(t)-2N^*(t))N^*(t)+\frac{m^2(t)h(t)P^*(t)}{(1+m(t)h(t)P^*(t))^2}-\delta_2(t)P^* dt>0$$
$$\alpha:=\int_0^T \left(r(t)(K_+(t)+K_-(t)-2N^*(t))N^*(t)+\frac{b(t)p^2(t)N^*(t)P^*(t)}{(1+p(t)N^*(t))^2}-\delta_2(t)P^* \right)dt>0$$
then $(N^*(t), P^*(t))$ is unstable. 
\end{cor}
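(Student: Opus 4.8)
The plan is to obtain Corollary \ref{alphaH} directly from Lemma \ref{alpha} by substituting the Gilpin growth rate $k(t,N)=r(t)(N-K_-(t))(K_+(t)-N)$ and the Holling type II response $f(t,N,P)=b(t)p(t)N/(1+p(t)N)$ into the quantity $\alpha$ defined there, and simplifying the integrand. Since $(N^*(t),P^*(t))$ is a positive $T$-periodic solution, the quantities $N^*$, $P^*$ and $1+p(t)N^*$ are strictly positive, so all the algebraic manipulations below are legitimate and the implication ``$\alpha>0\Rightarrow$ instability'' is simply inherited from Lemma \ref{alpha}.

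First I would compute the partial derivatives entering the formula for $\alpha$. For the growth term, $\frac{\partial k}{\partial N}(t,N)=r(t)\bigl(K_+(t)+K_-(t)-2N\bigr)$, so $\frac{\partial k}{\partial N}(t,N^*)N^*=r(t)(K_+(t)+K_-(t)-2N^*)N^*$, which is exactly the first term in the statement. For the functional response, note that $f$ here is independent of $P$, so $\frac{\partial f}{\partial P}\equiv 0$ and the contribution $\gamma(t)\frac{\partial f}{\partial P}(t,N^*,P^*)P^*$ in Lemma \ref{alpha} vanishes identically. A quotient-rule computation gives $\frac{\partial f}{\partial N}(t,N,P)=\dfrac{b(t)p(t)}{(1+p(t)N)^2}$, hence $-\frac{\partial f}{\partial N}(t,N^*,P^*)P^*=-\dfrac{b(t)p(t)P^*}{(1+p(t)N^*)^2}$, while $f(t,N^*,P^*)\dfrac{P^*}{N^*}=\dfrac{b(t)p(t)P^*}{1+p(t)N^*}$.

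The only step needing (minimal) care is combining these last two contributions over the common denominator $(1+p(t)N^*)^2$:
\[
b(t)p(t)P^*\left(\frac{1}{1+p(t)N^*}-\frac{1}{(1+p(t)N^*)^2}\right)=b(t)p(t)P^*\cdot\frac{p(t)N^*}{(1+p(t)N^*)^2}=\frac{b(t)p^2(t)N^*P^*}{(1+p(t)N^*)^2},
\]
which is the second term in the statement; the term $-\delta_2(t)P^*$ carries over unchanged. Collecting everything, the integrand of $\alpha$ from Lemma \ref{alpha} reduces exactly to $r(t)(K_+(t)+K_-(t)-2N^*(t))N^*(t)+\dfrac{b(t)p^2(t)N^*(t)P^*(t)}{(1+p(t)N^*(t))^2}-\delta_2(t)P^*$, so the $\alpha$ of Corollary \ref{alphaH} coincides with that of Lemma \ref{alpha}, and the instability claim follows at once. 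There is no genuine obstacle here beyond bookkeeping; the single place a sign or a power could slip is the partial-fraction simplification displayed above.
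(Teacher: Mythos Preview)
Your proposal is correct and follows exactly the approach intended in the paper: the corollary is stated as an immediate specialization of Lemma~\ref{alpha}, and you have carried out the substitution and simplification that the paper leaves implicit. Your computations of $\partial k/\partial N$, $\partial f/\partial N$, $\partial f/\partial P\equiv 0$, and the combination of the two $f$-terms over the common denominator are all accurate.
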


\medskip Note that if $\delta_2\equiv 0$ and 
$$\int_0^T \left(r(t)(K_+(t)+K_-(t)-2N^*(t))N^*(t)\right) dt \geq 0,$$
 then $(N^*(t), P^*(t))$ is unstable.

Also, by \cite[Theorem 7.1]{KPPZ} we have that if $(P^T)'\neq I$, where we denote by $I$ the $2\times 2$ identity matrix, then the fixed point index of the fixed points of $P^T$ can be either $1$, $0$ or $-1$. Thus, taking into account Liouville's formula,  if the value $\alpha$ defined in Lemma \ref{alpha} is nonzero in the corresponding solution
we have that $(P^T)'\neq I$ and  the 
fixed point index  can take one of the three mentioned  values. 

An immediate consequence of this remark is the following 

\begin{tr}
Assume that the prey growth function and the functional response satisfy the listed properties (strong Allee effect) and that $\lambda^-_2<1<\lambda^+_2$.
If there is not an infinite number of periodic solutions and for each periodic solution the corresponding $\alpha\neq 0$ then the number of stable periodic orbits is smaller or equal to the number of unstable ones plus one. 
\end{tr}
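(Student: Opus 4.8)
The argument rests on the degree identity $\deg(I-P_T,S,(0,0))=1$ obtained above together with the additivity of the Brouwer degree. Under the standing hypothesis that \eqref{PP} possesses only finitely many positive $T$-periodic solutions, the extended map $P_T$ on $S$ has finitely many fixed points, each of them isolated, so that $1=\deg(I-P_T,S,(0,0))$ equals the sum of the fixed point indices $i(P_T,x)$ over all fixed points $x\in S$.

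First I would list these fixed points and their indices. The positive $P$-axis carries no periodic orbit besides the origin (there $\dot P=-\delta_1(t)P-\delta_2(t)P^2<0$), and by Theorem \ref{th:sA0p} the positive $N$-axis carries only $N^*_-$ and $N^*_+$; hence the fixed points on the axes are exactly $(0,0)$, $(\pm N^*_-(0),0)$ and $(\pm N^*_+(0),0)$. In the regime $\lambda^-_2<1<\lambda^+_2$ one has $i(P_T,(0,0))=1$ and $i(P_T,(\pm N^*_\pm(0),0))=-1$ for all four sign choices (case (b) above, Figure \ref{case2}). Every other fixed point is of the form $\gamma(0)$ for a positive $T$-periodic orbit $\gamma$ of \eqref{PP} in the open first quadrant; since the extension of $P_T$ to $S$ is equivariant under the coordinate reflections $(N,P)\mapsto(\pm N,\pm P)$, each such $\gamma$ contributes a quadruple of fixed points (one per open quadrant), all with the same index $i(\gamma)\in\mathbb Z$. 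Additivity then gives $1=1+4(-1)+4\sum_\gamma i(\gamma)$, hence
\[
\sum_\gamma i(\gamma)=1,
\]
the sum running over the positive $T$-periodic orbits of \eqref{PP} in the first quadrant.

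Next I would estimate each $i(\gamma)$. By Liouville's formula the determinant of the monodromy matrix of the linearization along $\gamma$ equals $e^{\alpha}$, with $\alpha$ as in Lemma \ref{alpha}; since $\alpha\neq0$ this determinant is $\neq1$, so $(P^T)'(\gamma(0))\neq I$, and \cite[Theorem 7.1]{KPPZ} yields $i(\gamma)\in\{-1,0,1\}$. If in addition $\gamma$ is Lyapunov stable, then, being isolated and the Poincar\'e map being orientation preserving, $i(\gamma)=1$ by \cite[Theorem 2, Section 2.5]{ortega1995}. Denoting by $s$ and $u$ the numbers of stable and of unstable positive periodic orbits, respectively, we obtain $1=\sum_\gamma i(\gamma)=s+\sum_{\gamma\ \text{unstable}}i(\gamma)\ge s-u$, i.e. $s\le u+1$, which is the assertion. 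The corresponding inequality counting also the boundary periodic orbits follows \emph{a fortiori}, the boundary contributing the single stable orbit $(0,0)$ and the two unstable ones $(N^*_\pm(0),0)$.

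The delicate point is the index bookkeeping under reflection: one has to verify that the extension of $P_T$ from the first quadrant to $S$ is genuinely equivariant under the coordinate reflections --- which relies on the invariance of both axes for \eqref{PP} --- so that the four mirror images of a positive orbit carry equal index and no spurious fixed points are introduced on the axes. Granted this, together with the finiteness hypothesis that makes ``degree $=$ sum of indices'' meaningful and the fact that $\alpha\neq0$ forces each index into $\{-1,0,1\}$, the conclusion reduces to the one-line counting argument above.
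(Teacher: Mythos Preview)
Your proof is correct and follows precisely the line the paper intends. In the paper the theorem is stated as ``an immediate consequence'' of the preceding remarks --- namely that $\deg(I-P_T,S,(0,0))=1$, that in case (b) the five axis fixed points contribute indices $1,-1,-1,-1,-1$ (Figure~\ref{case2}), that by \cite[Theorem 7.1]{KPPZ} together with Liouville's formula $\alpha\neq0$ forces $i(\gamma)\in\{-1,0,1\}$, and that by \cite[Theorem 2, Section 2.5]{ortega1995} a Lyapunov stable isolated orbit has index $1$ --- but the actual additivity/counting argument is never written down. You have supplied exactly that argument, including the reflection-symmetry bookkeeping that turns the index sum into $\sum_\gamma i(\gamma)=1$ over positive orbits and then the inequality $s\le u+1$.
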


%%%%%%%%%%%%%%%%%%%%%%%%%%%%%%%%%%%%%%%%%%%%%%%%%%%%%%%%%
%%%%%%%%%%%%%%%%%%%%%%%%%%%%%%%%%%%%%%%%%%%%%%%%%%%%%%%%%
%%%%%%%%%%%%%%%%%%%%%%%%%%%%%%%%%%%%%%%%%%%%%%%%%%%%%%%%%
\section{Leslie--Gower}\label{Sec:LG}
We consider in this section a modified Leslie--Gower predator--prey model, in which, as in the previous sections, the prey growth and the functional response satisfy the listed properties for a strong (weak) Allee effect (gs1)--(gs4) ((gw1)--(gw4)) and (f1)--(f5), respectively. The system writes

\begin{equation}\label{mLG}
\begin{cases}
\dot{N}=k(t,N)N-f(t,N,P)P,\\
\dot{P}=c_2(t)P\left(1-\dfrac{P}{N+c(t)} \right),
\end{cases}
\end{equation}
where $N$ is the prey and $P$ the predator, the $T-$periodic functions $k$ and $f$ are the prey growth and the functional response, and $c,\;c_2$ are positive, $T$-periodic functions. 

The case of weak Allee effect can be treated, taking into account Theorem \ref{th:wA0p}.%, as in \cite[Section 5]{garrione2016}. 
We now analyze the case of a strong Allee effect. In absence of predators, the same arguments of Theorem \ref{th:sA0p} hold: the system admits two $T$-periodic solutions, $N_{\pm}^*(t)$. It can be also shown that, when the prey is absent, the system admits a $T$-periodic solution $P_0^*(t)$, and in particular that $$\underline{c}\leq P_0^*(t) \leq \overline{c}.$$ 

The linearization of \eqref{mLG} at $(0,0)$ is
$$\begin{pmatrix} \dot{u}\\\dot{v}\end{pmatrix}=
\begin{pmatrix} k(t,0)&0\\0&c_2(t)\end{pmatrix}
\begin{pmatrix} u\\v\end{pmatrix},$$
then,  $(0,0)$ is always unstable (saddle point). 

The linearization of \eqref{mLG} at $(N^*_{\pm},0)$ is
$$\begin{pmatrix} \dot{u}\\\dot{v}\end{pmatrix}=
\begin{pmatrix} k(t,N^*_{\pm})+\dfrac{\partial k}{\partial N}(t,N^*_{\pm})N^*_{\pm}&
*\\0&c_2(t)\end{pmatrix}
\begin{pmatrix} u\\v\end{pmatrix},$$
and the canonical fundamental matrix $U(t)$ with $t=T$ is
$$U(T)=
\begin{pmatrix} \exp{\int_0^T \left(k(t,N^*_{\pm}(t))+\dfrac{\partial k}{\partial N}(t,N^*_{\pm}(t))N^*_{\pm}(t))dt\right) } &*\\
0&\exp{\int_0^T c_2(t)}dt\end{pmatrix}.$$
As in the previous section, we conclude that the two eigenvalues of $U(T)$ are
$$\lambda_1^{\pm}=\exp{\int_0^T \dfrac{\partial k}{\partial N}(t,N^*_{\pm}(t))N^*_{\pm}(t)}dt
\qquad \textnormal{and} \qquad \lambda_2^{\pm}=\exp{\int_0^T c_2(t)}dt.$$
We have that $\lambda_1^->1>\lambda_1^+$ and $\lambda_2^-=\lambda_2^+>1$, then both $N^*_{\pm}$ are unstable. 

Finally, the linearization of \eqref{mLG} at $(0,P_0^*)$ is
$$\begin{pmatrix} \dot{u}\\\dot{v}\end{pmatrix}=
\begin{pmatrix} k(t,0)-\dfrac{\partial f}{\partial N}(t,0,P_0^*(t))P_0^*(t)&-\dfrac{\partial f}{\partial P}(t,0,P_0^*(t))P_0^*(t)\\[0.3cm]
\dfrac{c_2(t)P_0^{*2}(t)}{c(t)^2}&c_2(t)\left(1-\dfrac{2P_0^*(t)}{c(t)}\right)\end{pmatrix}
\begin{pmatrix} u\\v\end{pmatrix}.$$
Note that, if the functional response is only prey-dependent we have
\begin{equation}\dfrac{\partial f}{\partial P}(t,0,P_0^*(t))=0;
\label{eq:preydependent}
\end{equation}
it is worthwhile to note that this also holds for some predator-dependent functional responses (for instance the Beddington--DeAngelis and the Watt, see \cite{garrione2016}), and it seems biologically reasonable.
If we assume \eqref{eq:preydependent} and taking into account that
$$\int_0^T c_2(t)\left(1-\dfrac{P_0^*}{c(t)} \right)dt=0,$$
then the canonical fundamental matrix $U(t)$ with $t=T$ writes
$$U(T)=
\begin{pmatrix} \exp{\displaystyle\int_0^T \left(k(t,0)-\dfrac{\partial f}{\partial N}(t,0,P_0^*(t))P_0^*(t)dt \right)}&0\\[0.3cm]
*&\exp{\left(-\displaystyle\int_0^T c_2(t)\dfrac{P_0^*}{c(t)}dt\right)}\end{pmatrix}.$$
The two eigenvalues of $U(T)$ are
$$\lambda_1=\exp{\int_0^T \left(k(t,0)-\dfrac{\partial f}{\partial N}(t,0,P_0^*(t))P_0^*(t)\right)dt}
 \textnormal{ and } 
\lambda_2=\exp{\left(-\int_0^T c_2(t)\dfrac{P_0^*(t)}{c(t)}dt\right)}<1.$$
With a strong Allee effect on the prey and a prey-dependent trophic function we have that $k(t,0)<0$, and $(0,P_0^*)$ turns out to be stable, while with a weak Allee effect or a predator-dependent trophic function, its stability depends on the parameter values. Then, thanks to the sign of $k(t,0)$, the strong Allee effect stabilizes this $T$-periodic solution, with respect to when a weak Allee effect or a logistic growth \cite{song2008dynamic} are considered.We will assume this case in the following.

The indexes in the expanded domain are illustrated in Figure \ref{LGdegree}; since we have that the sum of the indexes is equal to one, the system can exhibit one of the following cases (when a strong Allee effect is considered):
\begin{itemize}
\item there are no other non-trivial $T$-periodic orbits;
\item there are infinite non-trivial $T$-periodic orbits;
\item if there is a stable $T$-periodic orbit  then there is also an unstable.
\end{itemize}
As in the previous section, we have the following result, where now
$$\alpha=\int_0^T \left(\frac{\partial k}{\partial N}(t,N^*)N^*-\frac{\partial f}{\partial N}(t,N^*,P^*)P^*+f(t,N^*,P^*)\frac{P^*}{N^*}- \frac{2c_2(t)P^*}{N^*+c(t)}\right) dt.$$

\begin{tr}
Assume that the prey growth function and the functional response in system \eqref{mLG} satisfy the listed properties (strong Allee effect). If there is not an infinite number of periodic solutions and for each periodic solution the corresponding $\alpha\neq 0$, then the number of stable periodic orbits is smaller or equal to the number of unstable ones. 
\end{tr}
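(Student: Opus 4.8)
The plan is to run, for system \eqref{mLG}, the same degree-theoretic count used at the end of Section \ref{Sec:Strong}, the only genuinely new input being the modified set of boundary fixed points and their indices. First I would fix a positively invariant and absorbing box ${\cal K}''$ for \eqref{mLG}: in the $N$-direction the bound $\dot N = k(t,N)N - f(t,N,P)P < 0$ on $\{N = \overline K_+ + \varepsilon\}$ is unchanged (here $k(t,\overline K_+ + \varepsilon) < 0$ by (gs1)--(gs2)), while in the $P$-direction $\dot P = c_2(t)P(1 - P/(N+c(t))) < 0$ once $N \le \overline K_+ + \varepsilon$ and $P > \max_t(\overline K_+ + \varepsilon + c(t))$. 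Reflecting ${\cal K}''$ across both coordinate axes gives a convex set $S$; the axes are invariant for \eqref{mLG} and the vector field points strictly inward on $\partial S$, so the extended Poincar\'e map $P_T$ is a continuous self-map of $S$ with no fixed point on $\partial S$, whence $\deg(I - P_T, S, (0,0)) = 1$ by the Leray--Schauder principle \cite{Zeidler1986}. Since $P_T$ is the time-$T$ map of a planar flow it is orientation preserving, and on each open quadrant the extended map is a reflection-conjugate of $P_T$, hence orientation preserving there as well.

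Next I would list the fixed points of $P_T$ in $S$ with their indices, as in Figure \ref{LGdegree}. The positive $T$-periodic orbits of \eqref{mLG} lie in the open first quadrant and, by reflection, each yields one copy with the same index in each of the four open quadrants. On the boundary of the quadrants sit the seven fixed points $(0,0)$, $(\pm N^*_\pm(0),0)$, $(0,\pm P_0^*(0))$, whose types come from the linearizations already computed in this section: $(0,0)$ is a saddle, with multipliers $e^{\int_0^T k(t,0)\,dt} < 1$ and $e^{\int_0^T c_2(t)\,dt} > 1$, so its index is $-1$; at $(\pm N^*_-(0),0)$ both multipliers exceed $1$ ($\lambda_1^- > 1$, $\lambda_2^- > 1$), so the index is $+1$; at $(\pm N^*_+(0),0)$ one has $\lambda_1^+ < 1 < \lambda_2^+$, so the index is $-1$; and at $(0, \pm P_0^*(0))$, under \eqref{eq:preydependent} and using $k(t,0) < 0$, both multipliers lie in $(0,1)$, so these are stable nodes of index $+1$. (The reflected map is only Lipschitz, not $C^1$, along the axes, but at each of these points the reflection contributes merely a term proportional to $|\cdot|$ in the off-diagonal direction, which a straight-line homotopy deforms to the diagonal linear part without producing zeros on a small sphere, so the index is still $\operatorname{sign}\det(I - A)$ for the $2\times2$ linearization $A$.) These seven indices add up to $+1$.

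The positive orbits are handled exactly as in Section \ref{Sec:Strong}. The hypothesis that there are finitely many of them makes each an isolated fixed point of $P_T$; the hypothesis $\alpha \neq 0$ together with Liouville's formula gives $(P_T)' \neq I$ at the orbit, so by \cite[Theorem 7.1]{KPPZ} its fixed point index lies in $\{-1,0,1\}$, and if the orbit is Lyapunov stable then, $P_T$ being orientation preserving, \cite[Theorem 2 in Section 2.5]{ortega1995} forces the index to be $+1$. Writing $i_1,\dots,i_m \in \{-1,0,1\}$ for the indices of the $m$ positive orbits of \eqref{mLG} and using additivity of the degree together with the four quadrant-copies of each,
\[
1 = \deg(I - P_T, S, (0,0)) = 1 + 4\sum_{j=1}^m i_j ,
\]
hence $\sum_{j=1}^m i_j = 0$. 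If $s$ of these orbits are stable and $u = m - s$ are not, each stable one contributes $i_j = +1$ and each unstable one contributes $i_j \ge -1$, so $0 = \sum_j i_j \ge s - u$, i.e. $s \le u$, which is the claim.

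The step I expect to be the main obstacle is purely technical: verifying that ${\cal K}''$ (reflected to $S$) is indeed positively invariant and absorbing for \eqref{mLG}, so that the normalization $\deg(I-P_T,S,(0,0))=1$ is legitimate, and justifying the boundary indices for the reflected, non-smooth map. Everything else transcribes the argument of Section \ref{Sec:Strong}; the only conceptual change is arithmetic --- the origin here is a saddle of index $-1$ (instead of the stable node of index $+1$ in \eqref{PP}) and there are two extra stable fixed points $(0, \pm P_0^*(0))$ of index $+1$, which makes the boundary indices sum to $+1$ and hence forces the positive orbits to have total index $0$ rather than $1$, removing the ``$+1$'' that appeared in the corresponding theorem of Section \ref{Sec:Strong}.
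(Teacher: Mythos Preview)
Your proposal is correct and follows essentially the same degree-theoretic route as the paper: the paper simply states the theorem after the sentence ``As in the previous section, we have the following result'' and relies on the index count of Figure~\ref{LGdegree}, so the argument is exactly the one you spell out---the seven boundary fixed points have indices summing to $1$, forcing $\sum_j i_j=0$ for the positive orbits, and then Ortega's index-$+1$ result for stable orbits together with $i_j\ge -1$ gives $s\le u$. You supply more detail than the paper (the explicit absorbing box in the $P$-direction for \eqref{mLG} and the remark on the non-smoothness of the reflected map along the axes), which the paper leaves implicit.
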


In the following section we will present numerical results showing that the the existence of a stable and of an unstable periodic orbits seems to be the common behavior of this class of models.

\begin{figure}[!ht]
\centering
\begin{tikzpicture}[scale=1.5]
\draw (0,2) -- (0,-2);
\draw (-2,0) -- (2,0);

\draw[gray!50!white] (-1.7,-0.7) -- (-1.7,0.7) -- (0,1.2) -- (1.7,0.7) -- (1.7,-0.7) -- (0,-1.2) -- (-1.7,-0.7);
\draw [->, gray!50!white] (1.3,1.3) -- (0.8,0.8);
\draw [->, gray!50!white] (-1.3,1.3) -- (-0.8,0.8);
\draw [->, gray!50!white] (1.3,-1.3) -- (0.8,-0.8);
\draw [->, gray!50!white] (-1.3,-1.3) -- (-0.8,-0.8);

\draw [->] (0,0.1) -- (0,0.4);
\draw [->] (0,1) -- (0,0.9);

\draw [->] (0.5,0) -- (0.2,0);
\draw [->] (0.5,0.2) -- (0.5,0.5);
\draw [->] (0.5,-0.2) -- (0.5,-0.5);
\draw [->] (0.5,0) -- (0.8,0);
\draw [->] (2,0) -- (1.7,0);
\draw [->] (1,0) -- (1.3,0);
\draw [->] (1.5,0.2) -- (1.5,0.5);
\draw [->] (1.5,-0.2) -- (1.5,-0.5);

\draw [->] (0,-0.1) -- (0,-0.4);
\draw [->] (0,-1) -- (0,-0.9);
\draw [->] (-0.5,0) -- (-0.2,0);
\draw [->] (-0.5,0.2) -- (-0.5,0.5);
\draw [->] (-0.5,-0.2) -- (-0.5,-0.5);
\draw [->] (-0.5,0) -- (-0.8,0);
\draw [->] (-2,0) -- (-1.7,0);
\draw [->] (-1,0) -- (-1.3,0);
\draw [->] (-1.5,0.2) -- (-1.5,0.5);
\draw [->] (-1.5,-0.2) -- (-1.5,-0.5);

\draw [->] (0.3,0.7) -- (0.1,0.7);
\draw [->] (-0.3,0.7) -- (-0.1,0.7);
\draw [->] (0.3,-0.7) -- (0.1,-0.7);
\draw [->] (-0.3,-0.7) -- (-0.1,-0.7);

\draw [fill] (0,0) circle [radius=0.05];
\draw [fill] (0.5,0) circle [radius=0.05];
\draw [fill] (1.5,0) circle [radius=0.05];
\draw [fill] (-0.5,0) circle [radius=0.05];
\draw [fill] (-1.5,0) circle [radius=0.05];
\draw [fill] (0,0.7) circle [radius=0.05];
\draw [fill] (0,-0.7) circle [radius=0.05];

\node [above right] at (-0.1,0) {\color{gray}$\scalebox{0.75}[1.0]{\( - \)}1$};
\node [below] at (-1.8,0) {\color{gray}$\scalebox{0.75}[1.0]{\( - \)}1$};
\node [below] at (-0.7,0) {\color{gray}$1$};
\node [below] at (1.8,0) {\color{gray}$\scalebox{0.75}[1.0]{\( - \)}1$};
\node [below] at (0.7,0) {\color{gray}$1$};
\node [below] at (0.2,1.2) {\color{gray}$1$};
\node [below] at (0.2,-0.7) {\color{gray}$1$};
\end{tikzpicture}       
\caption{Graphical representation, where $1$ and $-1$ are the the fixed point indexes associated to the corresponding periodic solutions. }\label{LGdegree}
\end{figure}
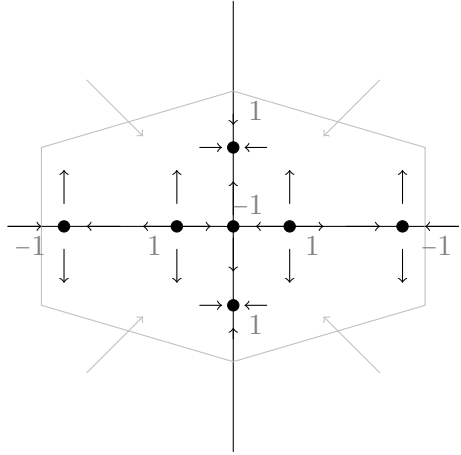

%%%%%%%%%%%%%%%%%%%%%%%%%%%%%%%%%%%%%%%%%%%%%%%%%%%%%%%%%
%%%%%%%%%%%%%%%%%%%%%%%%%%%%%%%%%%%%%%%%%%%%%%%%%%%%%%%%%
%%%%%%%%%%%%%%%%%%%%%%%%%%%%%%%%%%%%%%%%%%%%%%%%%%%%%%%%%
\section{Numerical results}\label{Sec:NumRes}
The aim of this section is to numerically investigate the features of systems \eqref{PP} and \eqref{mLG} in order to confirm the theoretical results and not to provide a complete picture of all the possible outcomes. 

\subsection{The Lotka--Volterra predator--prey model}
Here we take into account systems \eqref{PP}. We set:
$$k(t,N)=r(t)(N-K_-(t))(K_+(t)-N), \quad f(t,N)=\dfrac{b(t)p(t)N}{1+p(t)N},$$
following \cite{buffoni2011}. The periodic coefficients $r,\; K_+, \; \gamma, \; b, \; p,\; \delta_2$ are defined by the following expression for a generic parameter $c$ \cite{kuznetsov1992}: 
\begin{equation}\label{pc_fav}
c(t)=\hat{c}\left(1+s\sin\left(\dfrac{2\pi}{T} t\right)\right),
\end{equation}
where $\hat{c}$ represents the mean value on the period $T$ and $s\in [0,1]$ is a variability scaling factor (amplitude). This expression describes an increase in their values during the ``favorable'' season and a decrease during the ``unfavorable'' season. On the contrary, the periodic coefficient $\delta_1$, describing the mortality rate, and the prey extinction threshold $K_-$ are assumed to increase in the ``unfavorable'' season and to decrease in the ``favorable'' season. Then they are supposed to follow this general formulation
\begin{equation}\label{pc_unfav}
c(t)=\hat{c}\left(1+s\cos\left(\dfrac{2\pi}{T} t\right)\right).
\end{equation}
The parameters values used in the numerical simulations are listed in Table \ref{tab:parsim}, while we vary the parameter $\hat{p}$ in order to select different regimes of the systems. The results of the numerical simulations are shown in Figures \ref{a-t-estinzionePredatore}--\ref{c-t-estinzionetotale} for different values of $\hat{p}$.

\begin{table}[h!]
\centering
\begin{tabular}{ccccccccc}
\toprule
$\hat{r}$& $\hat{K}_-$& $\hat{K}_+$& $\hat{\gamma}$& $\hat{b}$& $\hat{\delta}_1$& $\hat{\delta}_2$& $s$& $T$\\
\midrule
0.11& 0.02& 1&0.39 & 0.88 &  0.19 & 0 & 0.1& 365\\
\midrule
$(d^{-1})$& & & & $(d^{-1})$&$(d^{-1})$&$(d^{-1})$& & $(d)$\\
\bottomrule
\end{tabular}
\caption{List of parameters values used in the numerical simulations and their unit of measure.}\label{tab:parsim}
\end{table}

In Figure \ref{a-t-estinzionePredatore}, obtained chosing $\hat{p}=1$, it can be seen that the solution starting from the initial condition $(0.2,0.1)$ tend to a $T$-periodic orbit where the predators are extinct. In Figure \ref{b-t-orbitaTperiodica}, starting with the same initial condition as before and with $\hat{p}=1.3$, the solution tends to a coexistence $T$-periodic orbit, shown in Figure \ref{b-r-orbitaTperiodica} in the phase-space $(N,P,t)$. Red dots indicate the Poincar\'e map of period $T$. Increasing the value of $\hat{p}$, we are able to find stable periodic orbits of different periods ($2T,\; 4T, \dots$). For $\hat{p}=1.522$, it is very difficult to recognize any periodicity of the solution. For $\hat{p}=1.55$ we obtain a stable periodic orbit of period $3T$ (Figures \ref{b-t-orbita3Tperiodica} and \ref{b-r-orbita3Tperiodica}). Finally, with $\hat{p}=1.8$ the only  outcome we get is the total extinction (Figure \ref{c-t-estinzionetotale}) and we are not able to find the (unstable) periodic orbit. But for greater values of $\hat{p}$ we were able to numerically detect an unstable $T$-periodic orbit (dashed lines in Figure \ref{b-r-orbitaTperiodicaInstabile} represents the trajectories backward in time). This $T$-periodic orbit disappears when $p$ becomes too large and the trajectories approach the $T$-periodic orbit $(N_-(t),0)$ as $t\to + \infty$.

Finally, we can numerically compute the eigenvalues of the canonical fundamental matrix varying $\hat{p}$ and find the critical values distinguishing between the three cases of Figure \ref{threeCases}. We approximately obtain the following thresholds,
\begin{itemize}
\item[(a)] $0<\hat{p}<1.21$,
\item[(b)] $1.21<\hat{p}<62.026$,
\item[(c)] $\hat{p}>62.026$,
\end{itemize}
which are in agreement with the critical bifurcation values of the non-periodic system studied in \cite{buffoni2011}. We can also numerically compute the quantity $\alpha$ defined in Theorem \ref{alphaH} when $\hat{p}=4.5$ (corresponding to Figure \ref{b-r-orbitaTperiodicaInstabile}), obtaining a positive value, in agreement with the analytical result.

\begin{figure}
\begin{center}
\subfigure[$\hat{p}=1$\label{a-t-estinzionePredatore}]{\includegraphics[width=.45\columnwidth,trim={3.8cm 8cm 4cm 8cm},clip]{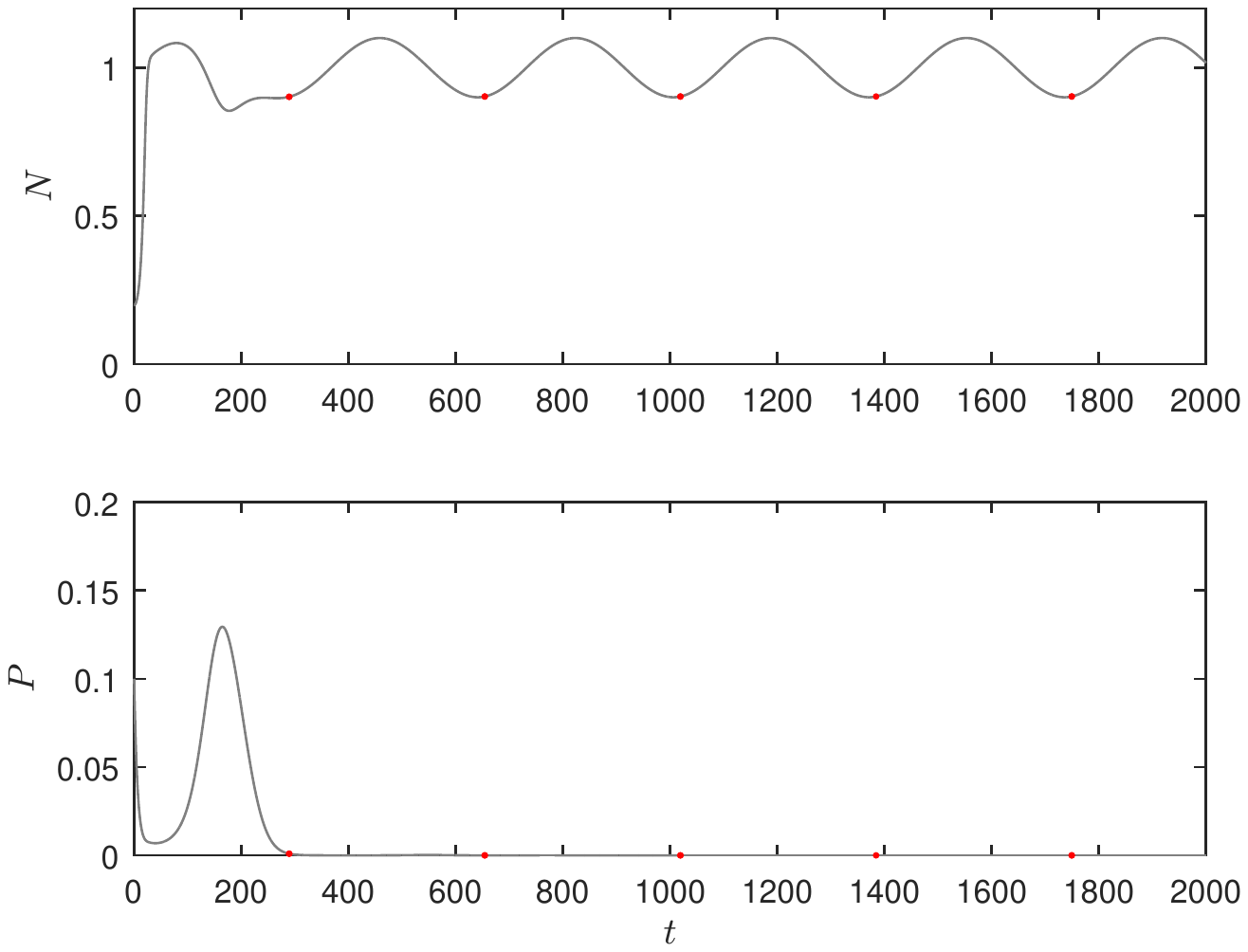}}
\hspace{-0.3cm}
\subfigure[$\hat{p}=1.3$\label{b-t-orbitaTperiodica}]{\includegraphics[width=.45\columnwidth,trim={3.8cm 8cm 4cm 8cm},clip]{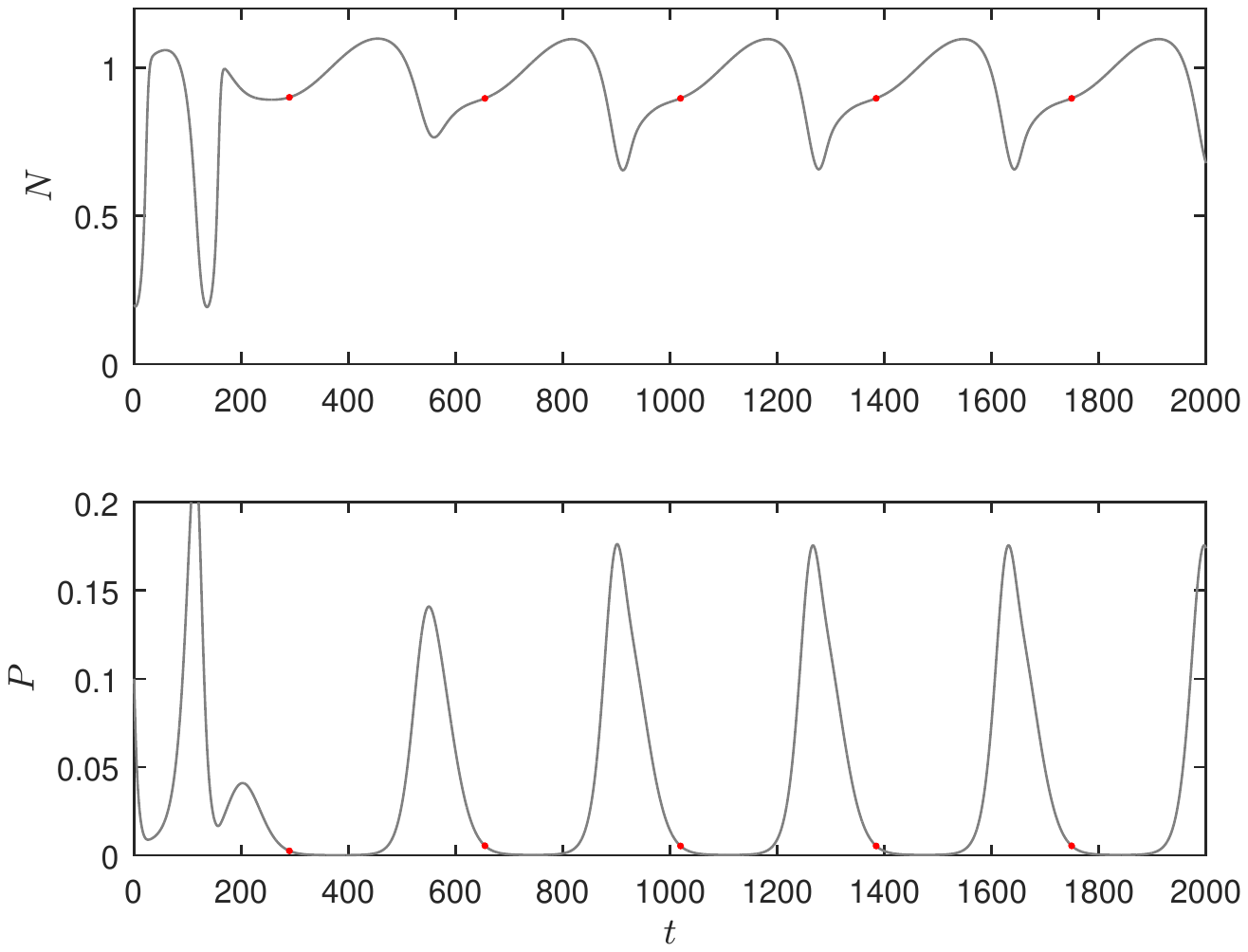}}\\
\subfigure[$\hat{p}=1.55$\label{b-t-orbita3Tperiodica}]{\includegraphics[width=.45\columnwidth,trim={3.8cm 8cm 4cm 8cm},clip]{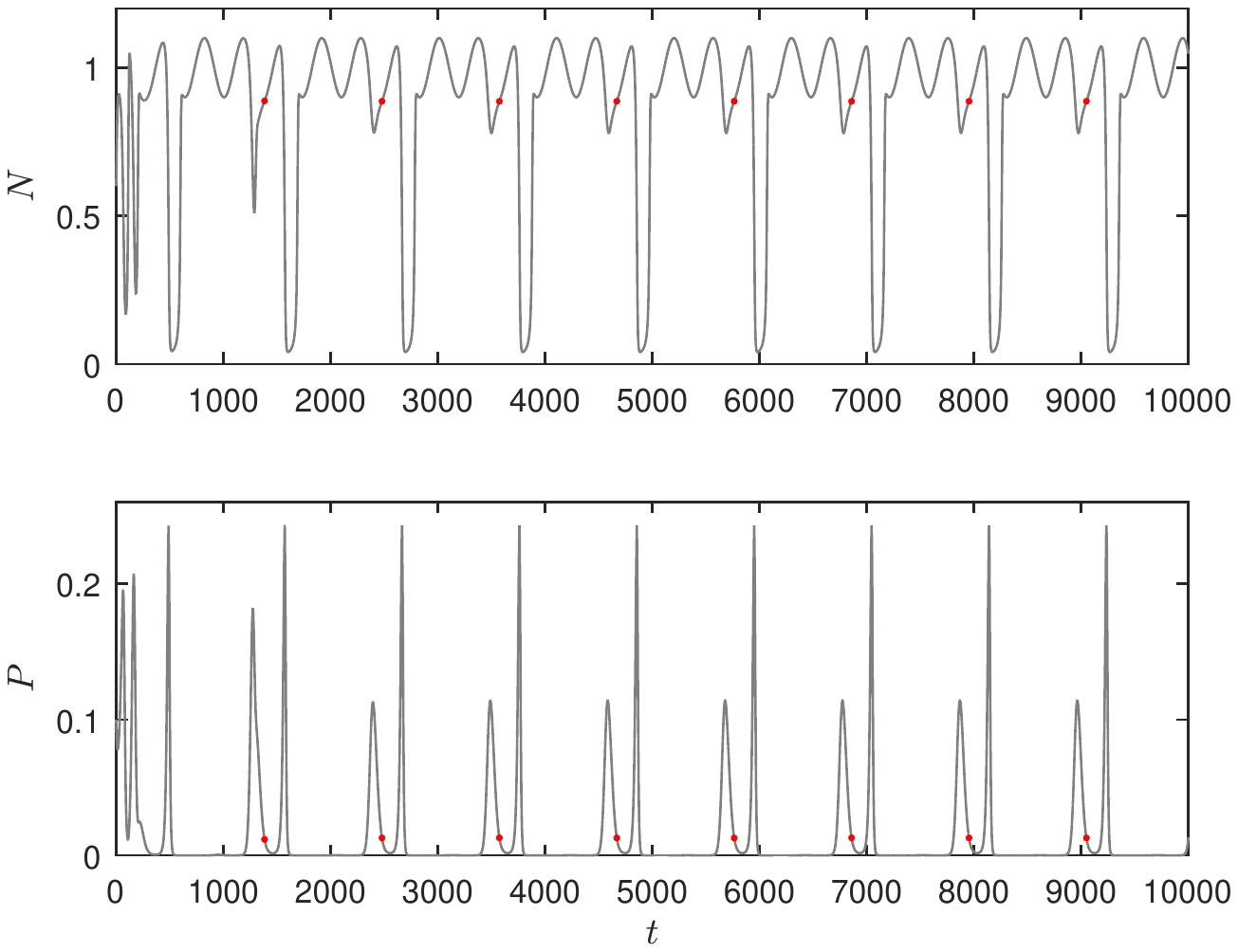}}
\hspace{-0.3cm}
\subfigure[$\hat{p}=1.8$\label{c-t-estinzionetotale}]{\includegraphics[width=.45\columnwidth,trim={3.8cm 8cm 4cm 8cm},clip]{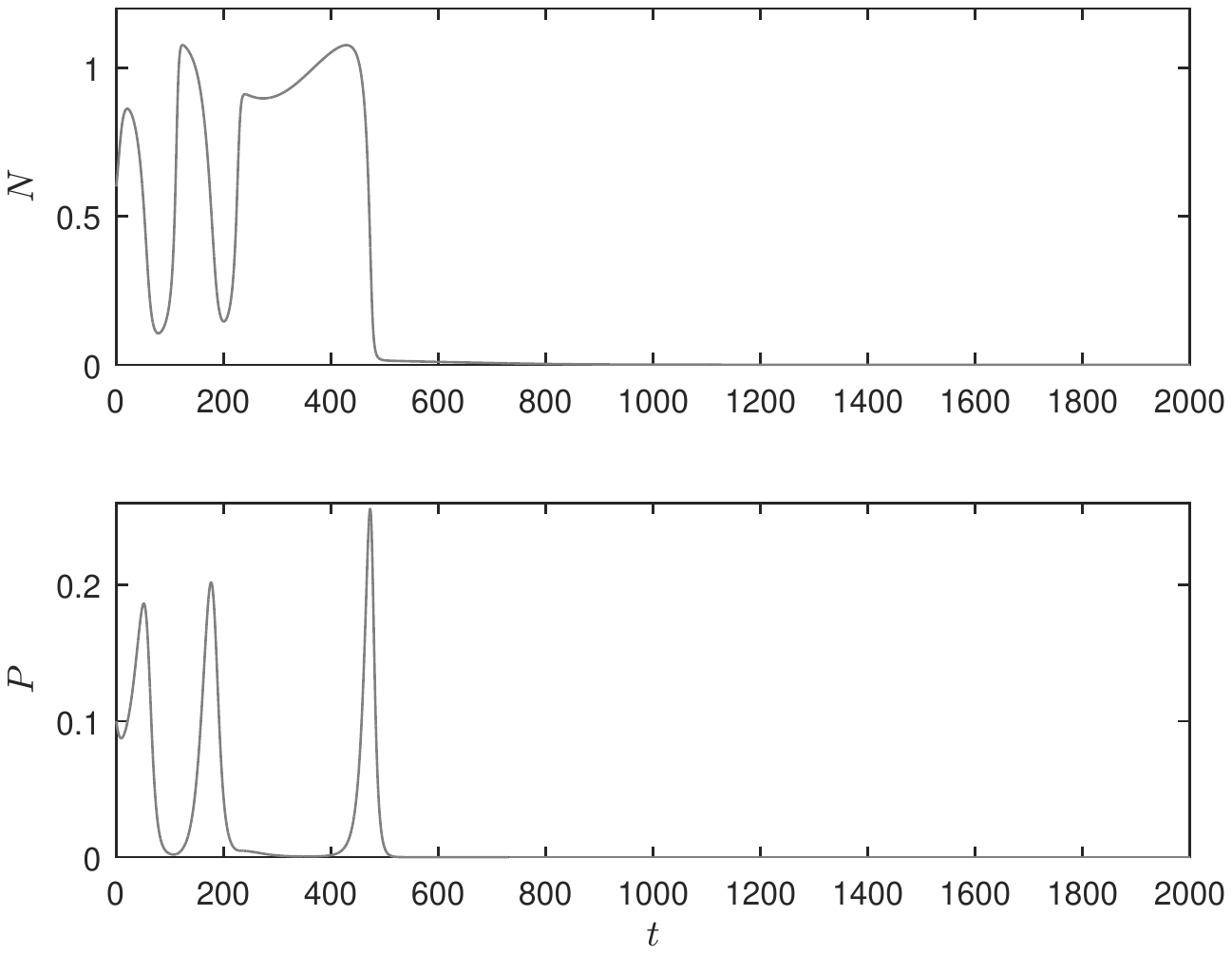}}\\
\caption{Prey and predator abundances over time for different values of $\hat{p}$.}\label{pp_time}
\end{center}
\end{figure}

\begin{figure}
\begin{center}
\subfigure[$\hat{p}=1.3$\label{b-r-orbitaTperiodica}]{\includegraphics[width=.45\columnwidth,trim={3.8cm 8cm 4cm 8cm},clip]{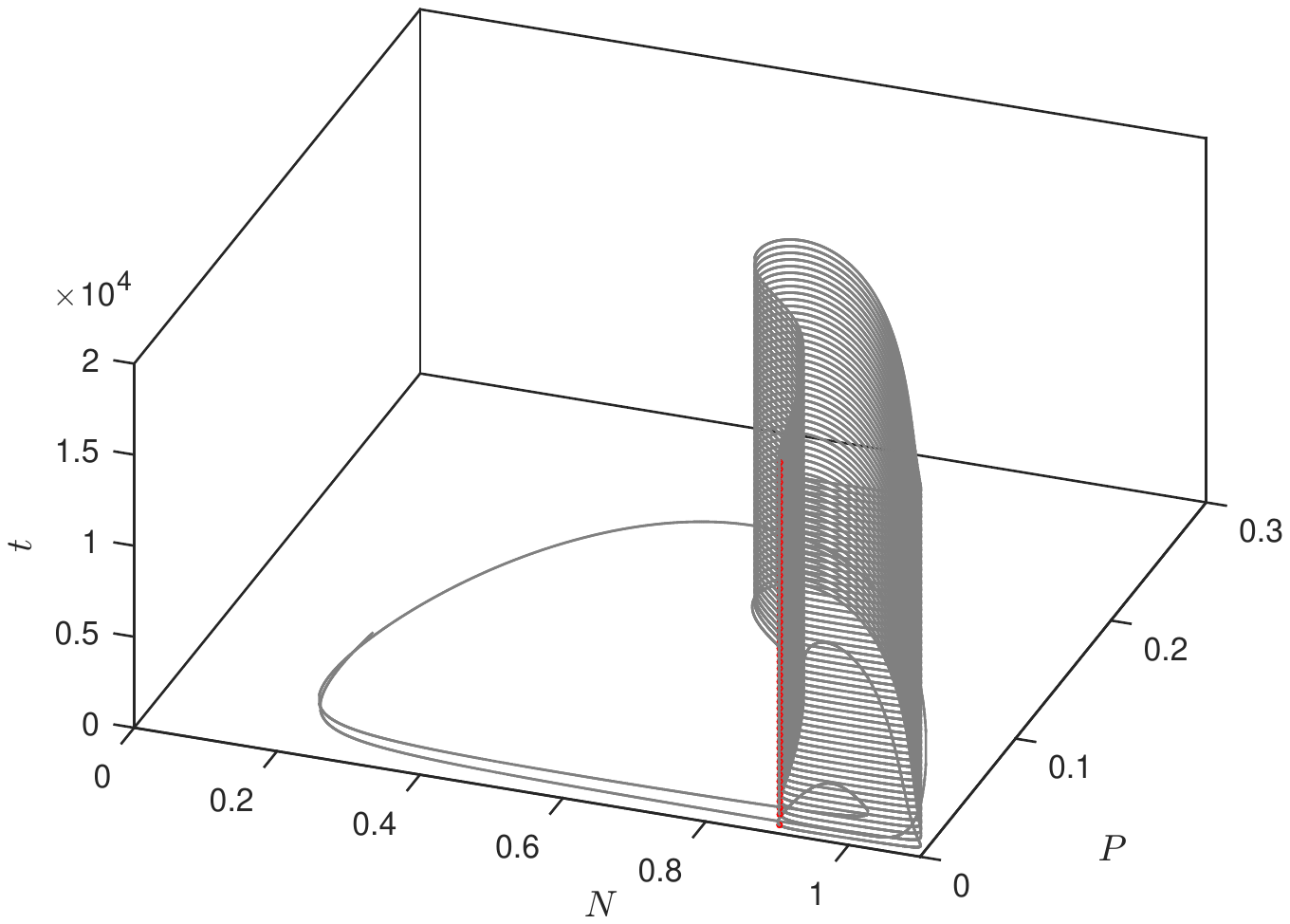}}
\hspace{-0.3cm}
\subfigure[$\hat{p}=1.55$\label{b-r-orbita3Tperiodica}]{\includegraphics[width=.45\columnwidth,trim={3.8cm 8cm 4cm 8cm},clip]{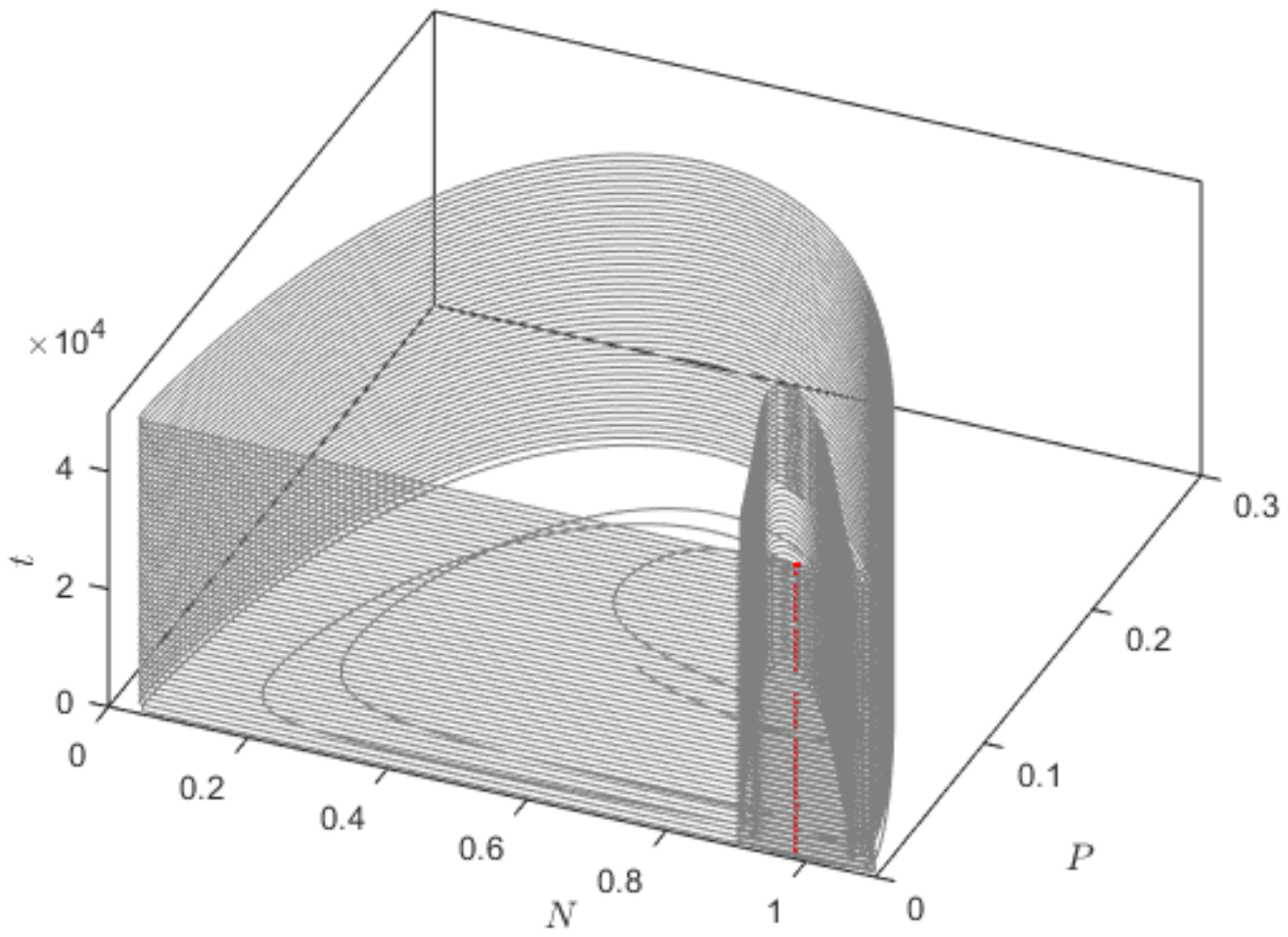}}\\
\caption{Trajectory in the phase-space $(N,P,t)$ for two different value of $\hat{p}$. Red dots indicate the Ponicar\'e Map of period (a) $T$ and (b) $3T$.}\label{pp_cy}
\end{center}
\end{figure}

\begin{figure}
\begin{center}
\includegraphics[width=.8\columnwidth,trim={3.8cm 8cm 4cm 8cm},clip]{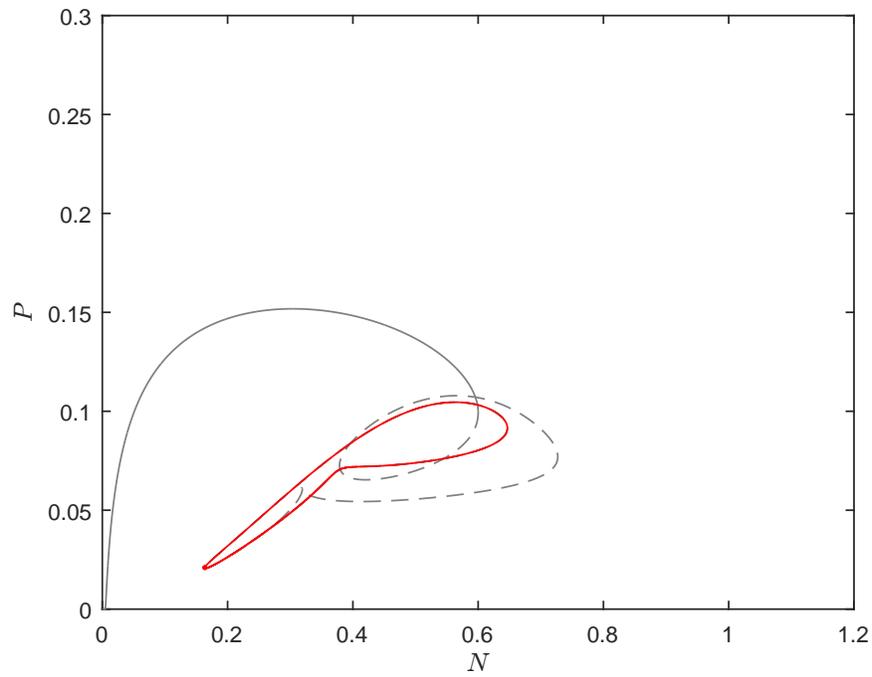}
\caption{Trajectory in the phase-space $(N,P)$ for $\hat{p}=4.5$. Solid and dashed lines denote the trajectories computed forward and backward in time, respectively. The unstable $T$-periodic orbit is marked in red.}\label{b-r-orbitaTperiodicaInstabile}
\end{center}
\end{figure}

\subsection{The Leslie--Gower model with strong Allee effect}
We consider here the periodic Leslie--Gower model with string Allee effect 
\begin{equation}\label{mLG_PM}
\begin{cases}
\dot{N}=r(t)N(N-k(t))\left(1-\dfrac N {K(t)} \right)-\dfrac{b(t)NP}{1+h(t)N+p(t)P},\\
\dot{P}=P\left(a(t)-\dfrac{P}{N+n(t)} \right),
\end{cases}
\end{equation}
which refers to the autonomous one treated in \cite{pal2014bifurcation}, but it is formulated slightly different from system \eqref{mLG} in the previous section. We keep this formulation to compare the results and to use the same parameter set. 

The periodic coefficients $r, \;K, \; b, \; h,\; p,\; a,\; n$ are again defined by \eqref{pc_fav}, while $k$ follows \eqref{pc_unfav}; the parameters values used in the numerical simulations are listed in Table \ref{tab:parsim}, and we consider $s=0.2$. 

In Figure \eqref{LG1} we can see in the phase space $(N,P)$ the trajectories starting from different initial conditions. They can converge to the stable coexistence periodic cycle or to the one in which the prey is extinct; those stable periodic orbits are enlighten in red. In blu we mark the unstable ones, which are the total extinction and two non-coexistence ones in which the predator is extinct. Our conjecture, observing the picture and from the theoretical study, is that another unstable periodic orbit is present, which separates two stable ones. However, it cannot be numerically detected (since we should start on the stable manifold), while the other semi-trivial ones have been detected backward in time. 

Furthermore, increasing the value of the variability scaling factor $s$ in \eqref{pc_fav} and \eqref{pc_unfav} (in order to explore how the variability affects the feature of the system), the coexistence periodic solution becomes wider, it collides with the unstable one and then they disappear.

\begin{table}[h!]
\centering
\begin{tabular}{ccccccccc}
\toprule
$\hat{r}$& $\hat{k}$& $\hat{K}$& $\hat{b}$& $\hat{h}$& $\hat{p}$& $\hat{a}$ & $\hat{n}$ & $T$\\
\midrule
0.4& 2& 12&0.25& 0.375 &  0.175 & 1.5 & 0.1& 365\\
%\midrule
%$(d^{-1})$& & & & $(d^{-1})$&$(d^{-1})$&$(d^{-1})$& & $(d)$\\
\bottomrule
\end{tabular}
\caption{List of parameters values used in the numerical simulations.}\label{tab:parsim}
\end{table}

\begin{figure}
\begin{center}
\includegraphics[width=.9\columnwidth,trim={3.8cm 8cm 4cm 8cm},clip]{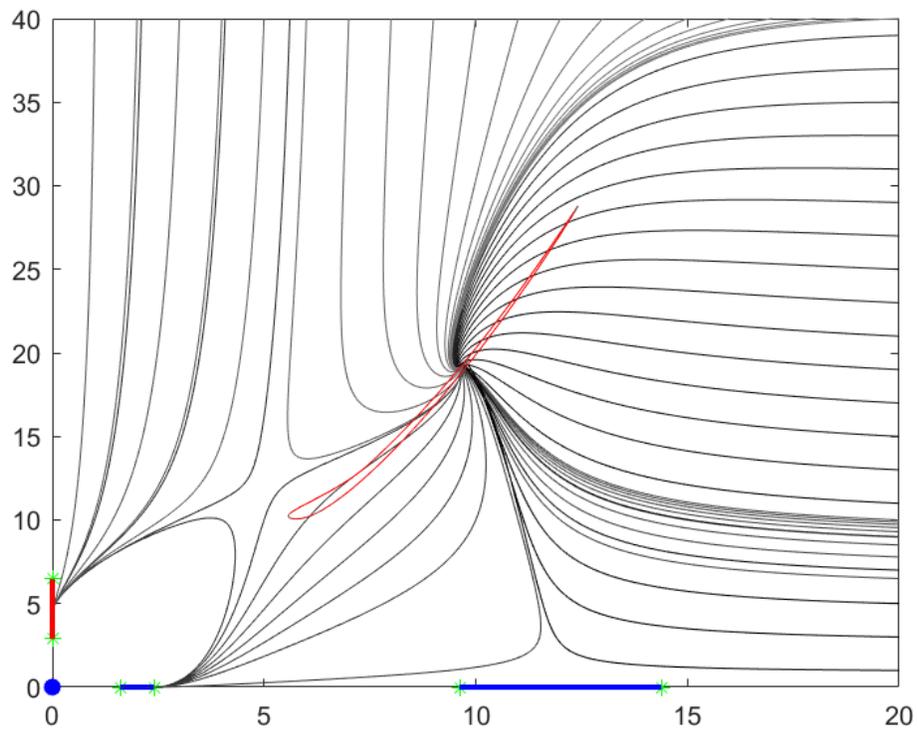}
\caption{Trajectory in the phase-space $(N,P)$ for $s=0.2$. Black lines denote the trajectories. The stable $T$-periodic orbits are marked in red, while in blue we denote the unstable ones. Depending on the initial conditions, trajectories converge to a coexistence periodic orbit, or to the one in which the prey is extinct.}\label{LG1}
\end{center}
\end{figure}

%%%%%%%%%%%%%%%%%%%%%%%%%%%%%%%%%%%%%%%%%%%%%%%%%%%%%%%%%
%%%%%%%%%%%%%%%%%%%%%%%%%%%%%%%%%%%%%%%%%%%%%%%%%%%%%%%%%
%%%%%%%%%%%%%%%%%%%%%%%%%%%%%%%%%%%%%%%%%%%%%%%%%%%%%%%%%

\section{Concluding remarks}\label{Sec:Concl}

In this paper we have studied a general predator--prey model with seasonality in which we take into account an Allee effect on the prey growth. The seasonality is described by periodic coefficients appearing in the mathematical model. The keynote point of the paper is that the theoretical results are obtained for a general class of models, only on the basis of some properties determining the shape of the prey growth function and of the functional response. Furthermore, both cases of weak and strong Allee effects are analyzed. When a weak Allee effect is considered, we prove extinction when the basic reproduction number $R_0<1$, persistence when $R_0>1$, and the existence of a periodic solution when $R_0>1$. The results are obtained exploiting the same techniques used in \cite{garrione2016}, but some preliminary steps are needed in order to obtain crucial properties of the systems in absence of predators with a non-monotonic prey growth function. In the strong Allee effect case, in order to prove the main theorem, an auxiliary result (see Theorem \ref{strong_preyonly}) about the existence of two non-trivial periodic solutions $N_\pm^*(t)$ in the case of a seasonally dependent model for the evolution of one species is obtained. This result generalizes analogous one stated in \cite{rizaner12}.  Thanks to this auxiliary result, we are able to prove extinction of the predators if $\lambda^-_2<\lambda^+_2<1$ and the existence of a nontrivial periodic solution if $\lambda^-_2<1<\lambda^+_2$ where 
$$\displaystyle \lambda^\pm_2=\int_0^T ( \gamma(t)f(t, N_\pm^*(t),0)-\delta_1(t)) dt,$$
being $T$ the period, $\gamma,\; \delta_1$ the conversion factor and the mortality rate of predators respectively, and $N_\pm^*(t)$ the $T$-periodic orbits in absence of predators. Note that now in the predators free line, the solutions can either tend to zero or to $N_+^*(t)$ and hence the $R_0$ associated to this periodic solution which is greater than one if $\lambda^+_2>1$ is not enough to control the dynamics when the number of predators is small.

Finally, we point out that periodic predator--prey models of Leslie--Gower type can be treated using the same techniques. In this case we always have $\lambda^\pm_2>1$ but the dynamics are quite different of the previous case as there exists a semi-trivial periodic solution in the $P$ axis.

Further research directions arise at this point: first of all, it is important to characterize the stability properties of the $T$-periodic orbits. Moreover, in the case $1<\lambda^-_2<\lambda^+_2$, numerical simulations lead us to conjecture that the only feature is the predator extinction, but this is still an open problem that we want to address in a future work. In the Leslie--Gower system, the behavior seems to be different, since we observe in the numerical results the presence of a $T$-periodic orbit. The difference can be related to the presence of the additional non-coexistence $T$-periodic orbit (prey extinction). 

From the numerical simulations we have also seen the presence of periodic orbits with different periods, and the systems seem to show chaotic orbits when the parameter $\hat{p}$ increases in a certain range. Then, after a threshold value, the non-periodic orbits disappear and we only detect the unstable $T$-periodic one. This phenomenon can be related to the formation of a P-to-P connection between the periodic orbits. We plan to investigate it in the future.

%%%%%%%%%%%%%%%%%%%%%%%%%%%%%%%%%%%%%%%%%%%%%%%%%%%%%%%%%
%%%%%%%%%%%%%%%%%%%%%%%%%%%%%%%%%%%%%%%%%%%%%%%%%%%%%%%%%
%%%%%%%%%%%%%%%%%%%%%%%%%%%%%%%%%%%%%%%%%%%%%%%%%%%%%%%%%

\bigskip
{\bf{Acknowledgment}}\\
C.R. and C.S. were supported by FCT-Funda\c{c}\~{a}o para a Ci\^encia e Tecnologia, in the framework of the project UID/MAT/04561/2013. Support by INdAM-GNFM is also gratefully acknowledged by C.S.

%%%%%%%%%%%%%%%%%%%%%%%%%%%%%%%%%%%%%%%%%%%%%%%%%%%%%%%%%%%
\newpage 
\bibliographystyle{elsarticle-harv}
\bibliography{bibliography}
\end{document}